\numberwithin{equation}{section}
\newtheorem{theorem}[equation]{Theorem}
\newtheorem{lemma}[equation]{Lemma}
\newtheorem{mlemma}[equation]{Main Lemma}
\newtheorem{Th}{Theorem}
\theoremstyle{definition}
\newtheorem{rk}[equation]{Remark}
\newtheorem{eg}[equation]{Example}
\newcommand{\bF}{\mathbb{F}}
\newcommand{\bZ}{\mathbb{Z}}
\newcommand{\bQ}{\mathbb{Q}}
\newcommand{\F}{\mathcal{F}}
\newcommand{\G}{\mathcal{G}}
\newcommand{\Hom}{\operatorname{Hom}}
\newcommand{\Aut}{\operatorname{Aut}}
\newcommand{\Inn}{\operatorname{Inn}}
\newcommand{\Rep}{\operatorname{Rep}}
\newcommand{\Nil}{\mathcal{N}\!\mathit{il}}
\newcommand{\ua}{\uparrow}
\newcommand{\m}{\mathcal}
\newcommand{\ov}{\overline}
\newcommand{\im}{\operatorname{im}}
\def \<{\langle }
\def \>{\rangle }
\renewcommand{\phi}{\varphi}
\newcommand{\colim}{\operatorname{colim}}
\newcommand{\prk}{{\operatorname{rk}_p}}
\newcommand{\nsg}{\unlhd} 
\newcommand*{\xlongrightarrow}[1]{\xrightarrow{\hspace{.2cm}{#1}\hspace{.2cm}}}
\def\co{\colon\thinspace}
\title{Group cohomology and control of $p$--fusion}
\author{David~J.~Benson \and Jesper~Grodal \and Ellen~Henke
\thanks{All three authors were supported by the Danish National Research
  Foundation through the Centre for Symmetry and
  Deformation (DNRF92). The second author was also supported by a
  European Science Foundation EURYI grant.}}
\authorrunning{D.\ J.\ Benson, J.\ Grodal, and E.\ Henke}
\institute{
D.\ J.\ Benson
\at 
Institute of Mathematics, University of Aberdeen, Aberdeen AB24 3UE,
United Kingdom
\and
J.\ Grodal and E.\ Henke
\at
Department of Mathematical Sciences,  University of
  Copenhagen, Copenhagen, Denmark
\\\email{jg@math.ku.dk , henke@math.ku.dk}}
\begin{document}

\maketitle

\begin{abstract}
We show that if an inclusion of finite groups $H \leq G$ of index
prime to $p$ induces a homeomorphism of mod $p$ cohomology varieties,
or equivalently an $F$--isomorphism in mod $p$ cohomology, then $H$ controls $p$--fusion
in $G$, if $p$ is odd. This generalizes classical results of Quillen
who proved this when $H$ is a Sylow $p$--subgroup, and furthermore
implies a hitherto difficult result of Mislin about cohomology
isomorphisms. For $p=2$ we give analogous
results, at the cost of replacing mod $p$ cohomology with higher
chromatic cohomology theories.

The results are consequences of a general algebraic
theorem we prove, that says that
isomorphisms between $p$--fusion systems over the same finite $p$--group
are detected on ele\-mentary
abelian $p$--groups if $p$ odd and abelian $2$--groups of exponent at most
$4$ if $p=2$.
\keywords{group cohomology \and $p$--fusion \and $F$--isomorphism \and
HKR characters}
\subclass{20J06 \and 20D20 \and 20J05}
\end{abstract}

\section{Introduction}
The variety of the mod $p$ cohomology ring of a finite group was first
studied by
Quillen in his fundamental 1971 paper
 \cite{Quillen:1971b+c}, and has been a central tool in group
 cohomology since then. The variety describes the mod $p$ group cohomology ring
 up to {\em $F$--isomorphism}, i.e., a ring homomorphism
 with nilpotent kernel and where every element in the target
 raised to a $p^k$th power lies in the image; see
 \cite[Prop.~B.8-9]{Quillen:1971b+c} and Remark~\ref{fisorem}.

Quillen's first application of the theory was to show in \cite{Quillen:1971a} that if the
Sylow $p$--subgroup inclusion $S \leq G$ induces an $F$--isomorphism
on mod $p$ cohomology
then $S$ controls
$p$--fusion in $G$, if $p$ is odd, which in this case means that
$G$ is $p$--nilpotent.  
Quillen's result has subsequently been revisited
in a number of contexts \cite{Henn:1990a,Brunetti:1998a,Gonzalez-Sanchez:2010a,Isaacs/Navarro:2010a,arXiv:1107.5158v2}, however all retaining the
hypothesis that $S$ is a Sylow $p$--subgroup in $G$.

The main goal of this paper is to considerably strengthen Quillen's
result by replacing $S$ by an arbitrary subgroup $H$ of $G$ containing
$S$, thereby moving past $p$--nilpotent groups to all finite groups.
 We recall that for $S \leq H \leq
G$,  $H$ is said to {\em control $p$--fusion} in $G$, if pairs of
tuples of elements of $S$ are conjugate in $H$ if they are conjugate
in $G$, or equivalently if for
  all $p$--subgroups $P,Q \leq S$, $N_H(P,Q)/C_H(P)$ equals
  $N_G(P,Q)/C_G(P)$ as homomorphisms from $P$ to $Q$.

\begin{Th}[$F$--isomorphism implies control of $p$--fusion, $p$
  odd]\label{MainCohomThm} Let $\iota\co H \leq G$ be an inclusion of finite groups of
  index prime to $p$, $p$ an odd prime, and consider the induced map
  on mod $p$ group cohomology $\iota^*\co H^*(G;\bF_p) \to H^*(H;\bF_p)$. If for each $x \in
  H^*(H;\bF_p)$, $x^{p^k} \in
  \im(\iota^*)$ for some $k \geq 0$,
then $H$ controls $p$--fusion in $G$.
\end{Th}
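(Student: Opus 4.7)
The plan is to recast the statement as an equality of fusion systems, and then combine Quillen's $F$--isomorphism theorem with the main algebraic result advertised in the abstract. Since $[G:H]$ is coprime to $p$, any Sylow $p$--subgroup $S$ of $H$ is also a Sylow $p$--subgroup of $G$, and both groups determine saturated fusion systems on $S$ with $\F_S(H)\subseteq \F_S(G)$; the conclusion ``$H$ controls $p$--fusion in $G$'' is exactly the reverse containment $\F_S(G)\subseteq \F_S(H)$.

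The first reduction is purely algebraic: by the main algebraic theorem announced in the abstract, two saturated fusion systems on a common finite $p$--group $S$ (with $p$ odd) coincide as soon as they induce the same automorphism groups on every elementary abelian subgroup $E\le S$. Thus it suffices to prove $\Aut_H(E)=\Aut_G(E)$ as subgroups of $\Aut(E)$ for each elementary abelian $E\le S$.

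To extract this from the cohomological hypothesis, fix such an $E$ and consider the restrictions $r_E^G\co H^*(G;\bF_p)\to H^*(E;\bF_p)$ and $r_E^H\co H^*(H;\bF_p)\to H^*(E;\bF_p)$, which satisfy $r_E^G=r_E^H\circ\iota^*$. Quillen's $F$--isomorphism theorem gives that $\im(r_E^G)$ is $F$--dense in $H^*(E;\bF_p)^{\Aut_G(E)}$ and $\im(r_E^H)$ is $F$--dense in $H^*(E;\bF_p)^{\Aut_H(E)}$. Applying $r_E^H$ to the hypothesis on $\iota^*$, every element of $\im(r_E^H)$ has some $p^k$--th power in $\im(r_E^G)$; chaining this with the two Quillen $F$--isomorphisms and the trivial inclusion $H^*(E;\bF_p)^{\Aut_G(E)}\subseteq H^*(E;\bF_p)^{\Aut_H(E)}$ (from $\Aut_H(E)\le\Aut_G(E)$) forces the latter inclusion to be an $F$--isomorphism as well.

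To conclude $\Aut_H(E)=\Aut_G(E)$, I would use that the reduced cohomology $H^*(E;\bF_p)/\sqrt{0}$ is a polynomial algebra over $\bF_p$, on which the Frobenius is injective. If $b$ lies in the larger invariant ring and $b^{p^k}$ in the smaller, then $(g(b)-b)^{p^k}=g(b^{p^k})-b^{p^k}=0$ for every $g\in\Aut_G(E)$, whence $g(b)=b$; so the containment of invariant rings is already an equality, and the classical fact that a finite subgroup of $\GL(E)$ is determined by its ring of invariants in $\bF_p[E^*]$ gives $\Aut_H(E)=\Aut_G(E)$. The main obstacle is the algebraic detection lemma of the second paragraph---this is the genuinely new content of the paper, and the piece whose naive analogue fails at $p=2$ and forces the chromatic replacement there; once it is granted, the cohomological deduction above is short.
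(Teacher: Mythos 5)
Your overall architecture matches the paper's: reduce to showing that $H$ and $G$ induce the same fusion on elementary abelian subgroups of a common Sylow $p$--subgroup $S$, then invoke the algebraic Theorem~\ref{MainThm}. However, both halves of your argument, as written, contain genuine gaps.

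First, the algebraic black box is misquoted. Theorem~\ref{MainThm} requires $\Hom_\G(A,B)=\Hom_\F(A,B)$ for all pairs of elementary abelian $A,B\le S$, i.e.\ full control of fusion on elementary abelians: which subgroups are conjugate and by which isomorphisms, not merely the equality of automizers $\Aut_H(E)=\Aut_G(E)$ that you reduce to. The latter is strictly weaker: an extra $\F$--automorphism of a centric subgroup $P$ may carry an elementary abelian $A\le P$ to a different subgroup without normalizing either, leaving every $\Aut(E)$ unchanged while altering $\Hom(A,S)$. The Main Lemma in the paper genuinely uses the $\Hom$ form of the hypothesis (to transport $\gamma^{-1}|_{\gamma(Q)}\colon \gamma(Q)\to Q$ into $\G$ for $\gamma\in\Aut_\F(P)$), so your reduction target does not plug into the machinery.

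Second, the cohomological step rests on the claim that $\im(r_E^G)$ is $F$--dense in $H^*(E;\bF_p)^{\Aut_G(E)}$ (and likewise for $H$). That is not what Quillen's stratification theorem says, and it is false in general. Take $G=(\bZ/p)^3\rtimes\bZ/3$ with $\bZ/3$ cyclically permuting coordinates, and $E=\langle e_1,e_2\rangle$. Then $N_G(E)=C_G(E)=(\bZ/p)^3$, so $\Aut_G(E)=1$ and the invariant ring is all of $H^*(E;\bF_p)$; but $\langle e_1\rangle$ and $\langle e_2\rangle$ are $G$--conjugate, so every class restricted from $G$ has correlated (simultaneously zero or nonzero) restrictions to these two subgroups, and hence no power of $\beta x_1$ lies in $\im(r_E^G)$. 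The image of restriction is $F$--dense only in the ring of families compatible over the whole Quillen category, not in the $W_G(E)$--invariants of a single $E$. The correct statement to invoke---and what the paper does invoke---is \cite[Prop.~10.9(ii)$\Rightarrow$(i)]{Quillen:1971b+c}, which deduces from the $F$--isomorphism hypothesis exactly the control of fusion on elementary abelian subgroups in the $\Hom$ form needed above; its proof works with the open strata $V_{G,E}^+\cong V_E^+/W_G(E)$ rather than full invariant rings. Replacing your third and fourth paragraphs by that citation both repairs the argument and supplies the stronger hypothesis required by Theorem~\ref{MainThm}, at which point your proof coincides with the paper's.
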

Recall that $\iota^*$ is injective by an easy transfer
argument \cite[Prop.~4.2.5]{Evens:1991a}, since $p \nmid
|G:H|$. Hence, the condition above that for each $x\in H^*(H,\bF_p)$
there exists $k\geq 0$ with $x^{p^k}\in\im(\iota^*)$, is in fact
equivalent to $\iota^*$ being an $F$-isomorphism.  Note that by the classical 1956 Cartan--Eilenberg stable elements formula
\cite[XII.10.1]{Cartan/Eilenberg:1956a}, $\iota^*$ is an (actual)
isomorphism if $H$ controls $p$--fusion in $G$, so the converse also holds.

The assumption in Theorem~\ref{MainCohomThm} that $H$ and $G$ share a common
Sylow $p$--subgroup is necessary as the inclusion $C_p \to C_{p^2}$
shows. Likewise the assumption that $p$ is odd is necessary, as
Quillen's original example  $Q_8
< 2A_4 = Q_8 \rtimes C_3$ shows. Stronger yet, we show in
Example~\ref{th:GnHn} that for any $n$
there  exists an inclusion $H \leq G$ of odd index with different
$2$--fusion but  which
induces a mod $2$ cohomology isomorphism modulo
the class of 
$n$--nilpotent unstable modules $\Nil_n$
\cite[Ch.~6]{Schwartz:1994a}; $F$--isomorphism
means isomorphism modulo the largest class $\Nil_1$.

\smallskip

Our proof of Theorem~\ref{MainCohomThm} is purely algebraic: By
\cite[Prop.~10.9(ii)$\Rightarrow$(i)]{Quillen:1971b+c} (or the
algebraic reference \cite{Alperin:2006a}) $F$-isomorphism in mod $p$ group cohomology implies control of fusion on elementary abelian subgroups. Thus, Theorem~\ref{MainCohomThm} follows from the following group
theoretic statement, which is of
independent interest. For $p$ odd it says that if $H$ controls
$p$--fusion in $G$ on elementary abelian $p$--subgroups then it in fact
controls $p$--fusion. We formulate and prove the statement in terms of
fusion systems, and refer the reader for example to
\cite{Aschbacher/Kessar/Oliver:2011a} for definitions and information
about these---we also recap the essential definitions in Section~\ref{grpsec}.

\begin{Th}[Small exponent abelian $p$--subgroups control $p$--fusion] \label{MainThm}
Let $\G \leq \F$ be two saturated fusion systems on the same
finite $p$--group $S$. 
Suppose that
$\Hom_\G(A,B)=\Hom_\F(A,B)$  for all $A,B \leq S$ with $A,B$
elementary abelian if $p$ is odd, and abelian of exponent at
most $4$ if $p=2$. Then $\G=\F$.

\end{Th}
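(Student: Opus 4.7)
My plan is to reduce the equality $\G=\F$ to a comparison of automorphism groups on selected subgroups, and then to a reconstruction-and-gluing argument using small-exponent abelian subgroups. By the Alperin fusion theorem for saturated fusion systems (see e.g.\ \cite{Aschbacher/Kessar/Oliver:2011a}), every morphism in $\F$ is a composite of restrictions of $\F$-automorphisms of $\F$-essential subgroups and of $S$ itself, and likewise for $\G$. Since $\G\leq\F$ gives $\Aut_\G(P)\leq\Aut_\F(P)$ for every $P\leq S$, it suffices to show $\Aut_\G(P)=\Aut_\F(P)$ for $P=S$ and for each $\F$-essential $P$. For $P$ fully $\F$-normalized (hence fully $\G$-normalized) the Sylow $p$-subgroup $\Aut_S(P)/\Inn(P)$ of $\Aut_\F(P)$ already lies in $\Aut_\G(P)$ by saturation of $\G$, so the problem reduces to showing that every $p'$-order element of $\Aut_\F(P)$ lies in $\Aut_\G(P)$.

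The central technical input is an automorphism-reconstruction lemma, which I would establish separately: a coprime-order automorphism $\phi$ of a finite $p$-group $P$ is determined by the family of restrictions $\{\phi|_A\}$, as $A$ ranges over the abelian subgroups of $P$ of exponent at most $p$ if $p$ is odd, and at most $4$ if $p=2$. For $p$ odd one uses the classical fact that such a $\phi$ is determined by its restriction to $\Omega_1(P)$, together with the observation that $\Omega_1(P)$ is generated (even covered) by its cyclic subgroups of order $p$. For $p=2$ the analogue replaces $\Omega_1(P)$ by $\Omega_2(P)$ and covers the latter by cyclic subgroups of order at most $4$, which are abelian of exponent at most $4$. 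The insufficiency of $\Omega_1$ at $p=2$ is exactly what is reflected in Quillen's example $Q_8<2A_4$.

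With the reconstruction lemma in hand, for $\phi\in\Aut_\F(P)$ of $p'$-order the hypothesis supplies, for each relevant abelian $A\leq P$, a $\G$-morphism $\psi_A\in\Hom_\G(A,S)$ with $\psi_A=\phi|_A$. I would then produce a single $\psi\in\Aut_\G(P)$ agreeing with $\phi$ on all such $A$, which by the reconstruction lemma forces $\psi=\phi$ and places $\phi$ in $\Aut_\G(P)$. The construction of $\psi$ is the heart of the matter: one approach is to choose a single abelian $A\leq P$ of the appropriate exponent that is well placed --- for example fully $\G$-normalized and such that the extension subgroup $N_{\phi|_A}$ contains $P$ --- and then invoke the extension axiom for $\G$ to extend $\phi|_A$ through $P$. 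When such a single good $A$ is unavailable, one glues the $\psi_A$ iteratively along joins of pairs of relevant abelian subgroups, exploiting at each step that $\G$ and $\F$ already agree on their abelian exponent-$\leq p$ (resp.\ exponent-$\leq 4$) data to see that the local pieces are mutually compatible.

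The main obstacle I expect is the $p=2$ case combined with this gluing. At $p=2$ the relevant abelian subgroups have exponent $\leq 4$ rather than being elementary abelian, and both the reconstruction lemma and the local-to-global assembly are genuinely more delicate than at odd primes, because the structure of $\Omega_2(P)$ and of the network of exponent-$4$ abelian subgroups inside a general $2$-group is considerably richer. A further subtlety is handling $\F$-essential subgroups $P$ that are far from abelian, where one must ensure that the extension/gluing process stays inside $\G$ --- i.e., produces a genuine $\G$-automorphism of $P$ compatible with the radical structure of $\Aut_\F(P)$ --- rather than escaping into $\F\setminus\G$.
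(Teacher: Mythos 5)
Your outline shares its skeleton with the paper's argument (Alperin's fusion theorem, reduction to $p'$--elements of $\Aut_\F(P)$, detection of $p'$--automorphisms on small--exponent abelian subgroups), but the step you yourself flag as ``the heart of the matter'' --- assembling the local $\G$--data on abelian subgroups into the equality $\Aut_\G(P)=\Aut_\F(P)$ --- is a genuine gap, and it is exactly where the paper's Main Lemma does all the real work. Two concrete problems. First, the extension axiom applied to $\phi|_A$ only produces \emph{some} extension $\hat\psi\in\Hom_\G(P,S)$ (when $A\nsg P$, so $N_{\phi|_A}\geq P$, and $\phi(A)$ is fully $\G$--centralized --- the latter already requires an argument, cf.\ the paper's Lemma~\ref{FSEl2}); there is no reason this extension is an automorphism of $P$, and even when it is, it only recovers $\phi$ up to an element of $C_{\Aut_\F(P)}(A)$. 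Controlling that error term is nontrivial: the paper proves $\Aut_\F(P)=\langle\Aut_\G(P),C_{\Aut_\F(P)}(A)\rangle$ via a strongly $p$--embedded subgroup argument, and crucially this requires a \emph{downward induction on $|P|$}, i.e.\ the standing hypothesis that $\Aut_\F(R)=\Aut_\G(R)$ for all $P<R\leq N_S(P)$, which your proposal never sets up. One then kills $C_{\Aut_\F(P)}(A)$ not by a ``reconstruction'' argument but by choosing $A$ so that this centralizer is a $p$--group and conjugating it into $\Aut_S(P)\leq\Aut_\G(P)$. Your alternative of ``gluing the $\psi_A$ iteratively along joins'' has no mechanism behind it at all --- joins of exponent-$p$ abelian subgroups are not abelian, so the hypothesis says nothing about them.

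Second, your reconstruction lemma is true but not for the reason you give: it is \emph{not} a classical fact that a $p'$--automorphism of an arbitrary $p$--group $P$ ($p$ odd) trivial on $\Omega_1(P)$ is trivial; the Gorenstein-type statement you are invoking requires $P/Z(P)$ elementary abelian (class at most $2$). For general $P$ one must first pass to a Thompson critical subgroup $C$ and work with $\Omega_1(C)$ (resp.\ $\Omega_2(C)$ for $p=2$) --- this is precisely the paper's Theorem~\ref{ThLemma}, which moreover delivers the stronger statement actually needed: a single abelian subgroup $A$ of the right exponent that is \emph{normal in $P$} with $C_{\Aut(P)}(A)$ a $p$--group. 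Normality of $A$ is what makes the extension axiom applicable with $N_{\phi|_A}\geq P$, and the $p$--group condition is what lets the error term be absorbed into $\Aut_S(P)$. In short: your detection input needs to be sharpened to the form of Theorem~\ref{ThLemma}, and the local-to-global step needs the Main Lemma together with the downward induction; as written, neither is supplied.
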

Our proof of this theorem is rather short. In outline we use Alperin's Fusion Theorem to reduce to a situation where we can apply 
results of J.~G.~Thompson on $p'$--automorphisms of $p$--groups
\cite[Ch.~5.3]{Gorenstein:1968a}. Consequently, our proof of Theorem \ref{MainCohomThm} is also relatively elementary. In particular, at odd primes, we obtain a comparatively simple algebraic proof of Mislin's Theorem. This theorem states that, for a homomorphism $\phi\co H \to G$ of finite groups, which induces an isomorphism in mod $p$ group cohomology, $|\ker(\phi)|$ and
$|G:\phi(H)|$ are coprime to $p$ and $\phi(H)$ controls $p$--fusion
in $G$. Here the first part is a 1978 theorem of Jackowski
\cite[Thm.~1.3]{Jackowski:1978a}. (Jackowski gave a topological
argument, but a short algebraic proof exists via Tate
cohomology; see \cite[Thm.~5.16.1]{Benson:1991b} with ${\mathbb Z}$
replaced by ${\mathbb Z}_{(p)}$.) So the proof of Mislin's Theorem reduces quickly to the situation that $\phi$ is an inclusion of finite groups of index prime to $p$, where the statement follows from Theorem~\ref{MainCohomThm} if $p$ is odd. Mislin's original proof  of his theorem uses the
Dwyer--Zabrodsky theorem \cite{Dwyer/Zabrodsky:1987a} in algebraic
topology, whose proof again relies on Lannes' theory \cite{Lannes:1992a}, extending Miller's proof of the
Sullivan conjecture \cite{Miller:1984a}.
In the early 1990s, for example at the 1994 Banff
conference on representation theory, Alperin made the highly publicized challenge
to find a purely algebraic proof of Mislin's theorem, and this was pursued
by many authors. 
Symonds \cite{Symonds:2004a},
following an idea of Robinson \cite[\S 7]{Robinson:1998a}, 
provided an algebraic reduction of the problem to a statement about 
cohomology of trivial source modules, which he then proved
topologically. Algebraic proofs were finally completed
independently by Hida \cite{Hida:2007a} and
Okuyama \cite{Okuyama:2006a}, who gave algebraic proofs of 
Symonds' statement, through
quite delicate arguments in modular representation theory. 
(See also e.g., \cite{Alperin:2006a} and \cite{Symonds:2007b}.) 

\smallskip

We now come to a further application of Theorem~\ref{MainThm}. As remarked above, the assumptions in Theorem~\ref{MainCohomThm} that $p$ is odd and $|G:H|$ is prime to $p$ are both in fact necessary. Switching from mod $p$ cohomology to generalized cohomology theories, we can
however combine the methods of Theorem~\ref{MainThm} with Hopkins--Kuhn--Ravenel (HKR) generalized
character theory \cite{Hopkins/Kuhn/Ravenel:1992a,Hopkins/Kuhn/Ravenel:2000a} to obtain
a statement that holds for all primes, and that also avoids the assumption that $H$
and $G$ share a common Sylow $p$--subgroup.

\begin{Th}[Chromatic group cohomology isomorphism implies control
  of $p$--fusion] \label{KnThm}
Let $\phi\co H \to G$ be a homomorphism of finite groups, and let $E(n)$ denote height $n$ Morava $E$--theory at
a fixed prime $p$. Suppose that $\phi$ induces an isomorphism $$\phi^*\co E(n)^*(BG)[ \frac1p]
\xlongrightarrow{\sim} E(n)^*(BH)[\frac1p]$$
for some $n \geq \prk(G)$. Then
$|\ker(\phi)|$ and $|G: \phi(H)|$ are prime to $p$, and $\phi(H)$ controls $p$--fusion
in $G$.\end{Th}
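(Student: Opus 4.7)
The plan is to use Hopkins--Kuhn--Ravenel (HKR) character theory to translate the cohomological hypothesis into a bijection on conjugacy classes of commuting $n$-tuples of $p$-power order elements in $H$ and $G$, and then apply Theorem~\ref{MainThm}. By the HKR theorem there is a faithfully flat extension $L_n$ of $E(n)^*[1/p]$ and a natural isomorphism $L_n\otimes_{E(n)^*}E(n)^*(BG)\cong\mathrm{Cl}_{n,p}(G;L_n)$, where the right-hand side denotes the ring of $L_n$-valued functions on $\Hom(\bZ_p^n,G)/G$. The hypothesis therefore upgrades to an isomorphism of such function rings, forcing the map of finite sets
\[
\phi_*\co\Hom(\bZ_p^n,H)/H\longrightarrow\Hom(\bZ_p^n,G)/G,\qquad[\rho]\mapsto[\phi\circ\rho],
\]
to be a bijection.

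From this bijection I extract the structure of $\phi$. Applying injectivity of $\phi_*$ to the classes of $(1,\dots,1)$ and $(z,1,\dots,1)$ for $z\in\ker(\phi)$ of order $p$ forces $z=1$, so $|\ker(\phi)|$ is coprime to $p$; replacing $H$ by $\phi(H)$ we may assume $\phi$ is an inclusion, and the HKR bijection descends. For $|G:H|$ prime to $p$, pick a Sylow $p$-subgroup $P$ of $G$ and let $A$ be a maximal abelian normal subgroup of $P$, so that $C_P(A)=A$. Since $\mathrm{rk}(A)\leq\prk(G)\leq n$, choose a surjection $\sigma\co\bZ_p^n\twoheadrightarrow A$. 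Surjectivity of $\phi_*$ allows us, after replacing $(P,A)$ by a $G$-conjugate, to assume $A\leq H$. For any $g\in N_G(A)$, the two maps $\sigma,\,c_g\circ\sigma\in\Hom(\bZ_p^n,H)$ are $G$-conjugate via $g$, hence $H$-conjugate by injectivity of $\phi_*$, so $c_g|_A\in\Aut_G(A)$ is realized by some element of $N_H(A)$; this gives $\Aut_G(A)=\Aut_H(A)$. Since $P$ is a Sylow of $N_G(A)$ and $C_G(A)\unlhd N_G(A)$, the intersection $A=P\cap C_G(A)$ is a Sylow of $C_G(A)$, so $|\Aut_G(A)|_p=|P|/|A|$. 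Combined with $A\leq C_H(A)$ this yields
\[
|H|_p\;\geq\;|N_H(A)|_p\;\geq\;\tfrac{|P|}{|A|}\cdot|A|\;=\;|P|\;=\;|G|_p,
\]
so $H$ contains a Sylow $p$-subgroup of $G$.

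Now $H\leq G$ has index prime to $p$ and shares a Sylow $p$-subgroup $S$ with $G$. The same surjection argument yields $\Hom_H(X,Y)=\Hom_G(X,Y)$ for all abelian $p$-subgroups $X,Y\leq S$: given $f=c_g|_X\in\Hom_G(X,Y)$ and $\sigma\co\bZ_p^n\twoheadrightarrow X$ (possible since $\mathrm{rk}(X)\leq\prk(G)\leq n$), the maps $\sigma,\,f\circ\sigma\in\Hom(\bZ_p^n,H)$ are $G$-conjugate, hence $H$-conjugate by injectivity of $\phi_*$, so $f$ is induced by an element of $H$. In particular this holds for all elementary abelian $X,Y\leq S$ when $p$ is odd, and for all abelian $X,Y\leq S$ of exponent at most $4$ when $p=2$; the hypothesis of Theorem~\ref{MainThm} is thus satisfied for the saturated fusion systems $\F_S(H)\leq\F_S(G)$, and the theorem gives $\F_S(H)=\F_S(G)$, i.e., $H$ controls $p$-fusion in $G$.

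The main obstacle is the Sylow-containment step. The key is to choose $A$ to be a maximal abelian normal subgroup of $P$: its abelianness keeps $\mathrm{rk}(A)\leq\prk(G)\leq n$ so that HKR applies, while the self-centralizing condition $C_P(A)=A$ forces $|\Aut_G(A)|_p=|P|/|A|$, which is exactly the amount of $p$-divisibility needed to pull a full Sylow $p$-subgroup of $G$ into $H$. The remaining ingredients are HKR as a black box and Theorem~\ref{MainThm}.
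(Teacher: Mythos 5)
Your proof is correct, and its overall architecture---HKR character theory converts the $E(n)^*[\frac1p]$--isomorphism into a bijection $\Rep(\bZ_p^n,H)\to\Rep(\bZ_p^n,G)$, which is ultimately fed into Theorem~\ref{MainThm}---matches the paper's. Where you genuinely diverge is the passage from that bijection to the hypotheses of Theorem~\ref{MainThm}, i.e.\ the step showing $p\nmid|G:\phi(H)|$. The paper routes this through Theorem~\ref{KnThm-algebraic} and Lemma~\ref{sylowiso}: there one takes an abelian subgroup $A\leq T$ of maximal order (hence $\F$--centric) and climbs the normalizer tower $A=T_0\vartriangleleft T_1\vartriangleleft\cdots\vartriangleleft T$ using the extension axiom and the identity $C_{\Aut_\F(T_{i+1})}(T_i)=\Aut_{Z(T_i)}(T_{i+1})$ to propagate control of fusion up to $T$ and conclude $T=S$. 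You instead take a maximal abelian \emph{normal} subgroup $A$ of a Sylow $p$--subgroup $P$ of $G$, use $C_P(A)=A$ to see that $A$ is a Sylow $p$--subgroup of $C_G(A)$, deduce $|\Aut_G(A)|_p=|P|/|A|$, and let the $\Rep$--bijection transfer this $p$--part into $N_H(A)$, giving $|H|_p\geq|G|_p$ by counting. Your route is more elementary for this step (only Sylow theory, no saturation axioms), at the cost of not yielding the abstract fusion-system version recorded in Theorem~\ref{KnThm-algebraic}, which is what the paper's detour buys. Two small points you should write out: the descent of the $\Rep$--bijection from $H$ to $\phi(H)$ requires lifting $p$--subgroups of $\phi(H)$ through the $p'$--kernel (Schur--Zassenhaus), and your kernel argument with $(z,1,\dots,1)$ needs $n\geq 1$, a positivity caveat the paper also records in Lemma~\ref{smalllemma}.
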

 In fact our proof works not just for $E(n)$ but for
any height $n$ cohomology theory satisfying the assumptions listed in
\cite[Thm.~C]{Hopkins/Kuhn/Ravenel:2000a}.  We
recall that for height $n$ Morava $E$--theory, $E(n)^*(pt) = W(\bF_{p^n})\llbracket
w_1,\ldots,w_{n-1}\rrbracket [u,u^{-1}]$, with $W(\bF_{p^n})$ the
unramified extension of degree $n$ of the $p$--adic integers, $|w_i| =
0$ and $|u|=-2$.
 As usual, the notation $[\frac1p]$ means that we invert
$p$ after taking cohomology, producing a $\bQ_p$--algebra. 

The converse to Theorem~\ref{KnThm} is clear, e.g., by the standard
Cartan--Eilenberg stable elements formula and the fact that a mod $p$ cohomology isomorphism of spaces induces an $E(n)^*$--isomorphism. Theorem~\ref{KnThm} also provides a new proof
of a strong form of Mislin's
theorem, assuming only isomorphism in large degrees. This proof is valid at all
primes, but replaces the reliance on Quillen's
variety theory by the (currently) less algebraic HKR character
theory; indeed the proof mirrors that of Atiyah's 1961 $p$--nilpotence
criterion \cite{Quillen:1971a,Atiyah:1961a}, replacing
$K$--theory by higher chromatic $E(n)$--theories; see Remark~\ref{rem:highdegcohom2}.

A $p$--rank restriction in Theorem~\ref{KnThm} is indeed necessary:
 $\bF_{p^2} \rtimes \bF_{p^2}^\times < (\bF_{p^2} \rtimes \bF_{p^2}^\times) 
\rtimes \Aut(\bF_{p^2})$ is an example of an inclusion of groups of
index prime to $p$, for $p$ odd, which is an $E(1)^*[\frac1p]$--equivalence, by
HKR character theory \eqref{HKR-theorem}, but with different
$p$--fusion; the same example with $\bF_{p^2}$ replaced by $\bF_{2^3}$
works for
$p=2$. We speculate that the bound $n \geq \prk(G)$ we give may be
close to optimal, but we currently do not know an example to this effect.

Finally, we remark that isomorphism on $E(n)^*$ is equivalent to
isomorphism on $n$th Morava $K$--theory $K(n)^*$, whereas an
$E(n)^*[\frac 1p]$--isomorphism is a priori significantly weaker---see
Remark~\ref{spec-remark} for a variety interpretation of
Theorem~\ref{KnThm} and
Remark~\ref{KnWhitehead} for the connection to other stable homotopy theory results.

\smallskip

To prove Theorem~\ref{KnThm} we need the following variant of 
Theorem~\ref{MainThm}, where we drop the assumption of a common Sylow
$p$-subgroup, but on the other hand assume the same fusion on all
abelian $p$--subgroups---it again appears to be new, even in special
cases. 

\begin{Th}[Abelian $p$--subgroups control fusion]
\label{KnThm-algebraic} Assume that a finite group homomorphism
$\phi\co H \to G$ induces a bijection $$\Rep(A,H) \xlongrightarrow{\sim} \Rep(A,G)$$
for all finite abelian $p$--groups $A$ with $\prk(A) \leq \prk(G)$. Then
$|\ker(\phi)|$ and $|G: \phi(H)|$ are prime to $p$, and $\phi(H)$ controls $p$--fusion
in  $G$.
More generally, suppose  that $\F$ and $\G$ are saturated fusion
systems on finite $p$--groups $S$ and $T$ respectively, 
and that $\phi\co T \to S$ is a fusion preserving homomorphism inducing a bijection $\Rep(A,\G) \xrightarrow{\sim} \Rep(A,\F)$ for
any finite abelian $p$--group $A$ with $\prk(A) \leq \prk(S)$. Then
$\phi$ induces an isomorphism from $T$ to $S$ and $\G$ to $\F$.
\end{Th}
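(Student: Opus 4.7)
The plan is to reduce the group-theoretic statement to the fusion-system version and then prove the latter using Theorem~\ref{MainThm}. For the reduction, given $\phi\co H\to G$, choose Sylow $p$-subgroups $T\leq H$ and $S\leq G$ and conjugate $\phi$ in $G$ so that $\phi(T)\leq S$; set $\G = \F_T(H)$ and $\F = \F_S(G)$. The $\Rep$-bijection passes to the fusion-system level because any homomorphism from a $p$-group into a finite group lands in a Sylow up to conjugacy, and once the fusion-system conclusion gives that $\phi|_T$ induces isomorphisms $T\to S$ and $\G\to\F$, one reads off that $|\ker\phi|$ and $|G:\phi(H)|$ are prime to $p$ (from $\phi|_T$ being injective with Sylow target and $|\phi(T)|=|S|$), and that $\phi(H)$ controls $p$-fusion in $G$.

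For the fusion-system version I would proceed in four steps: (i) show $\phi$ is injective; (ii) identify $T$ with $\phi(T)\leq S$; (iii) show $T=S$; (iv) apply Theorem~\ref{MainThm}. For (i), take $A = \bZ/p$: if $x\in\ker(\phi)$ has order $p$, then the inclusion $\langle x\rangle\hookrightarrow T$ and the trivial homomorphism $\langle x\rangle\to T$ give distinct classes in $\Rep(\langle x\rangle,\G)$, while both map to the class of the trivial homomorphism in $\Rep(\langle x\rangle,\F)$, contradicting injectivity of $\phi_*$. For (iii), the bijection on $\Rep(P,-)$ for abelian $P\leq S$ yields two consequences: (a) \emph{surjectivity}---every $\F$-morphism $\alpha\co P\to S$ is $\F$-conjugate to one factoring through $T$, so every abelian subgroup of $S$ has an $\F$-conjugate inside $T$; and (b) \emph{injectivity}---two $\G$-morphisms $P\to T$ that become $\F$-conjugate after inclusion into $S$ are already $\G$-conjugate. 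Assuming $T<S$, the normalizer inequality for $p$-groups gives $s\in N_S(T)\setminus T$ with $s^p\in T$; considering abelian subgroups of the form $\langle s\rangle Z$ for a well-chosen $Z\leq C_T(s)$, and applying (a) and (b) together with the saturation axioms for $\G$, should then produce two $\G$-classes of homomorphisms into $T$ that collapse into a single $\F$-class in $S$, contradicting (b). For (iv), with $T=S$ and $\G\subseteq\F$, the consequences (a) and (b) specialize on abelian $A,B\leq S$ of exponent at most $4$ (or elementary abelian for $p$ odd) to give $\Hom_\G(A,B)=\Hom_\F(A,B)$, and Theorem~\ref{MainThm} then yields $\G=\F$.

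The main obstacle will be step (iii). The bijection hypotheses readily yield that $T$ contains an $\F$-conjugate of every abelian subgroup of $S$ and that $\F$-fusion among such subgroups of $T$ is reflected in $\G$-fusion, but extracting strict equality $T=S$ requires combining these abelian constraints with the saturation axioms of $\G$ and the normalizer tower in the $p$-group $S$. Once $T=S$ is in hand, the reduction to Theorem~\ref{MainThm} is a direct verification of its hypothesis.
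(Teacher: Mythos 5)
Your overall architecture matches the paper's: the group statement reduces to the fusion-system one via Sylow subgroups; injectivity of $T\to S$ follows from the $A=\bZ/p$ case exactly as you say; and once $T=S$ is known, the injectivity half of the $\Rep$--bijection does give $\Hom_\G(A,B)=\Hom_\F(A,B)$ for the relevant abelian subgroups, so Theorem~\ref{MainThm} finishes the proof. Steps (i), (ii) and (iv) and the group-to-fusion reduction are therefore fine.

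The genuine gap is step (iii), which you yourself flag as the main obstacle, and your sketch there does not close it. The difficulty is that the hypothesis only constrains abelian subgroups, whereas $T=S$ is a statement about the (typically non-abelian) group $T$; some mechanism is needed to propagate fusion data from abelian subgroups up to $T$ itself. Your proposed route --- take $s\in N_S(T)\setminus T$ with $s^p\in T$ and consider abelian subgroups $\langle s\rangle Z$ with $Z\leq C_T(s)$ --- does not visibly produce a contradiction: such a subgroup $B=\langle s\rangle Z$ is $\F$--conjugate into $T$ by surjectivity, but the two natural homomorphisms this yields (the inclusion of $B\cap T$ into $T$ and its composite with an $\F$--map $B\to T$) need not be $\G$--inequivalent, so injectivity of $\Rep(-,\G)\to\Rep(-,\F)$ is not contradicted. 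The paper's solution (Lemma~\ref{sylowiso}) is a different, more structured argument: choose $A\leq T$ of maximal order among abelian subgroups of $T$; since by surjectivity every abelian subgroup of $S$ is $\F$--conjugate into $T$, $A$ has maximal order among abelian subgroups of $S$ and is therefore $\F$--centric. One then climbs the normalizer tower $A=T_0\vartriangleleft T_1\vartriangleleft\cdots\vartriangleleft T_n=T$ with $T_{i+1}=N_T(T_i)$, proving $\Hom_\F(T_i,T)=\Hom_\G(T_i,T)$ by induction on $i$: given $\gamma\in\Hom_\F(T_{i+1},T)$, its restriction to $T_i$ lies in $\G$, extends within $\G$ to $T_{i+1}$ by the extension axiom (Lemma~\ref{FSEl1}), and the discrepancy lies in $C_{\Aut_\F(T_{i+1})}(T_i)=\Aut_{Z(T_i)}(T_{i+1})\leq\Aut_\G(T_{i+1})$. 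This yields $\Aut_\F(T)=\Aut_\G(T)$; the Sylow axiom for $\G$ then forces $\Aut_S(T)=\Inn(T)$, and $\F$--centricity of $A$ gives $N_S(T)=T$, hence $T=S$. Without this (or an equivalent) tower argument invoking the extension and Sylow axioms, your step (iii) remains unproven.
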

Here $\Rep(A,G)$ denotes the quotient of $\Hom(A,G)$ where we identify
$\phi$ with $c_g \circ \phi$ for all $g \in G$, and likewise $\Rep(A,\F)$ is the quotient of $\Hom(A,S)$,
identifying two morphisms if they differ by a morphism in $\F$; we
spell out what the assumptions of the theorem mean in
Lemma~\ref{bijection}.

Finally, we remark that Theorems~\ref{MainCohomThm} and \ref{KnThm} can be formulated in
  terms of the fusions systems of the groups, and they should hold for
  abstract fusion systems as well. Indeed, as is clear from our proofs, the
  only missing piece is a reference for the Quillen
  stratification and the HKR character theorem in  that context---we will however not pursue this here.

\section{$p^\prime$--automorphisms of $p$--groups and proofs of Theorems~\ref{MainThm} and \ref{KnThm-algebraic}}\label{grpsec}

The goal of this section is to prove Theorems~\ref{MainThm} and \ref{KnThm-algebraic}
by group theoretic methods, combining manipulations with fusion
systems with results of J.~G.~Thompson on $p^\prime$--automorphisms of
$p$--groups, which can by now be found in textbooks.

Thompson's {\em critical subgroup
theorem} \cite[Lem.~2.8.2]{Feit/Thompson:1963a} (see also the textbook
reference \cite[Thm.~5.3.11]{Gorenstein:1968a}) says that for any
finite $p$--group $P$ there exists a
characteristic subgroup $C$ of $P$ such that $C/Z(C)$ is elementary
abelian, $[P,C] \leq Z(C)$, $C_P(C) = Z(C)$, and every nontrivial
$p'$--automorphism of $P$ restricts to a non-trivial
$p^\prime$--automorphism of $C$.  Our main classical group theoretic
tool in this paper is a variant of that theorem, where instead of a
critical subgroup we use a certain characteristic subgroup of $P$ of small exponent and consider its maximal abelian subgroups.

\begin{theorem}[Small exponent abelian subgroups detect $p'$--automorphisms]\label{ThLemma}
 Let $P$ be a finite $p$--group. There exists a characteristic subgroup
 $D$ of $P$, of exponent $p$ if $p$ is odd and exponent $4$ if
 $p=2$, such that $[D,P]\leq Z(D)$, and such that every non-trivial
 $p^\prime$--automorphism of $P$ restricts to a non-trivial automorphism of
 $D$. Furthermore, for any such $D$ and any
 maximal (with respect to inclusion) abelian
 subgroup $A$ of $D$ we have $A  \nsg P$ and $C_{\Aut(P)}(A)$ is a $p$--group.
\end{theorem}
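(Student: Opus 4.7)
The plan is to combine Thompson's critical subgroup theorem with standard identities for class-$2$ $p$-groups. First, I invoke \cite[Thm.~5.3.11]{Gorenstein:1968a} to obtain a characteristic subgroup $C\leq P$ with $C/Z(C)$ elementary abelian, $[P,C]\leq Z(C)$, $C_P(C)=Z(C)$, and with restriction injective on $p'$-elements of $\Aut(P)$. I then take $D := \Omega_n(C) = \langle x\in C : x^{p^n}=1\rangle$ with $n=1$ for $p$ odd and $n=2$ for $p=2$; as a characteristic subgroup of $C$ it is characteristic in $P$. Since $C/Z(C)$ has exponent $p$, one has $[C,C]^p = 1$ (from $[y,x]^p=[y^p,x]=1$), and feeding this into the class-$2$ power formula $(xy)^{p^n}=x^{p^n}y^{p^n}[y,x]^{\binom{p^n}{2}}$ gives $(xy)^{p^n}=x^{p^n}y^{p^n}$ for both parities, so $D$ has the stated exponent. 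The inclusion $[D,P]\leq Z(D)$ follows because $[D,P]\leq[C,P]\cap D\leq Z(C)\cap D$ and $Z(C)$ centralizes $D\leq C$.

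The main obstacle is the detection statement. By the critical subgroup theorem it suffices to show that a $p'$-automorphism $\alpha$ of $C$ trivial on $D$ must be trivial, which I prove in three steps. \emph{(a)} Since $\Omega_1(Z(C))\leq D$ and the kernel of $\Aut(A)\to\Aut(\Omega_1(A))$ is a $p$-group for any abelian $p$-group $A$, $\alpha$ is trivial on $Z(C)$. \emph{(b)} The class-$2$ power formula makes the $p^n$-th power map descend to a group homomorphism $\psi\co C/Z(C)\to Z(C)/Z(C)^{p^n}$, $\bar x\mapsto x^{p^n}$, whose kernel equals the image $\bar D$ of $D$ in $C/Z(C)$. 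Since $\alpha$ commutes with $\psi$ and acts trivially on its codomain, $\alpha(v)v^{-1}\in\bar D$ for every $v\in C/Z(C)$; combined with $\alpha|_{\bar D}=\mathrm{id}$ and coprime action on the elementary abelian group $C/Z(C)$, this forces $\alpha$ trivial on $C/Z(C)$. \emph{(c)} The group of automorphisms of $C$ trivial on both $Z(C)$ and $C/Z(C)$ is canonically $\Hom(C/Z(C),Z(C))$ via the derivation $\beta\mapsto(\bar x\mapsto\beta(x)x^{-1})$, which is a $p$-group, so $\alpha=\mathrm{id}$.

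For the last assertion, any maximal abelian $A\leq D$ satisfies $A = C_D(A)\supseteq Z(D)\supseteq [D,P]$, giving $[A,P]\leq A$ and hence $A\nsg P$. For $\alpha\in C_{\Aut(P)}(A)$ of order coprime to $p$, the identity $\alpha([x,a]) = [x,a]$ together with the class-$2$ expansion $[\alpha(x),a] = [\alpha(x)x^{-1},a]\,[x,a]$ valid in $D$ shows that $\alpha(x)x^{-1}\in C_D(A) = A$ for every $x\in D$. Writing $a_x := \alpha(x)x^{-1}\in A$, induction using $\alpha(a_x) = a_x$ gives $\alpha^m(x) = a_x^m x$, so $a_x^{\mathrm{ord}(\alpha)} = 1$, forcing $a_x = 1$ as $A$ is a $p$-group. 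Thus $\alpha|_D = \mathrm{id}$, and the detection step above yields $\alpha = \mathrm{id}$, so $C_{\Aut(P)}(A)$ is a $p$-group.
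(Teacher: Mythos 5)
Your proof is correct, and it shares the paper's skeleton: $D=\Omega_1(C)$ for $p$ odd, $D=\Omega_2(C)$ for $p=2$, where $C$ is a Thompson critical subgroup, with $A\nsg P$ deduced from $[A,P]\le[D,P]\le Z(D)\le C_D(A)=A$. The difference lies in how you handle the two nontrivial ingredients. For the detection statement the paper simply cites Gorenstein, Thm.~5.3.13, for odd $p$ and, for $p=2$, proves the analogue for $\Omega_2$ by a minimal-counterexample argument resting on the standard coprime-action lemmas; you instead give a uniform direct proof for both parities, reducing modulo the critical subgroup theorem to a $p'$-automorphism $\alpha$ of $C$ trivial on $D$, and killing it in three layers: on $Z(C)$ via $\Omega_1$, on $C/Z(C)$ via the $p^n$-th power homomorphism $\psi\co C/Z(C)\to Z(C)/Z(C)^{p^n}$ with $\ker\psi=\overline{D}$ (a nice observation that makes $\overline{D}$ ``cofinal'' in $C/Z(C)$ for coprime actions), and finally via the stability group of $1\le Z(C)\le C$ being a $p$-group. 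For the last assertion the paper invokes Thompson's $A\times B$ lemma applied to the action of $A\times B$ on $D$ with $C_D(A)=A$; you replace it by the explicit computation $[\alpha(x)x^{-1},a]=1$ --- legitimate because $D$ has class at most $2$ and $[D,D]\le Z(D)\le A$ is fixed pointwise by $\alpha$ --- followed by the coprimality argument $\alpha^m(x)=a_x^mx$. Your route is longer but entirely self-contained and uniform in $p$, essentially unwinding the proofs of the quoted results; the paper's is shorter at the price of quoting Thm.~5.3.13 and the $A\times B$ lemma and of a separate ad hoc lemma for $p=2$. Both arguments are valid.
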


Note that the example of the extra-special group $p^{1+2}_+$ shows that an
abelian characteristic subgroup that detects $p'$--automorphisms need not exist.
\begin{proof}[Proof of Theorem~\ref{ThLemma}]
Taking $D = \Omega_1(C)$, the subgroup generated by elements of
 order $p$ of a critical subgroup $C$, produces such a subgroup $D$ as
 in the theorem, for
 $p$ odd, as proved in  \cite[Thm.~5.3.13]{Gorenstein:1968a}. For
 $p=2$ the claim holds for $D = \Omega_{2}(C)$, the
subgroup of $C$ generated by elements of order at most $2^2$; we establish this fact 
in Lemma~\ref{p=2} below.

For the last part, let $A$ be a maximal abelian subgroup of $D$ with
respect to inclusion.  Since $[A,P]
\leq Z(D) \leq A$ it follows that $A \unlhd P$. Furthermore, if $B \leq
C_{\Aut(P)}(A)$ is a $p^\prime$--group, then $A \times B$ acts on
$P$ and thus on $D$. Since $A$ is maximal abelian it follows that $C_{D}(A) = A$, and in particular $B$ acts trivially
on $C_{D}(A)$; Thompson's $A \times B$--lemma
\cite[Thm.~5.3.4]{Gorenstein:1968a} now says that $[D,B]=1$ and so $B =1$, and we
conclude that $C_{\Aut(P)}(A)$ is a $p$--group as wanted.
\end{proof}

We now provide a proof of the postponed lemma for $p=2$.

\begin{lemma}\label{p=2}
 Let $P$ be a $2$--group such that $P/Z(P)$ is elementary abelian. Then
 for all $x,y \in P$,  $(xy)^4=x^4y^4$ and in particular $\Omega_2(P)$
 is of exponent at most $4$. Furthermore if  $B$ is a $p^\prime$--group
 of automorphisms of $P$ with $[\Omega_2(P),B]=1$, then $B=1$.
\end{lemma}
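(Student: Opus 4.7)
The plan is first to verify the power identity using the class-$2$ structure, and then to deduce the automorphism statement by a coprime-action argument that funnels through $Z(P)$.

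Since $P/Z(P)$ is elementary abelian of exponent $2$, one has $x^2 \in Z(P)$ and $[x,y] \in Z(P)$ for all $x,y \in P$, so $P$ has class at most $2$. Bilinearity of the commutator in class $2$ gives $[y,x]^2 = [y^2,x] = 1$, and the usual class-$2$ power formula $(xy)^n = x^n y^n [y,x]^{\binom{n}{2}}$ (quick induction using $yx = xy[y,x]$) specialises at $n=4$ to $(xy)^4 = x^4 y^4 [y,x]^6 = x^4 y^4$. Hence $x \mapsto x^4$ is a homomorphism $P \to Z(P)$, so $\{x \in P : x^4 = 1\}$ is a subgroup; as it contains every element of order dividing $4$, it equals $\Omega_2(P)$, which therefore has exponent at most $4$.

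For the automorphism statement I would first show $B$ acts trivially on $Z(P)$. Since $\Omega_1(Z(P)) \leq \Omega_2(P)$ is fixed pointwise by $B$, and $Z(P)$ is an abelian $2$-group while $|B|$ is odd, the coprime Fitting decomposition $Z(P) = C_{Z(P)}(B) \oplus [Z(P),B]$ applies; any nontrivial $2$-subgroup $[Z(P),B]$ would meet $\Omega_1(Z(P)) \leq C_{Z(P)}(B)$ nontrivially, a contradiction, so $[Z(P),B] = 1$. Next, for $\beta \in B$ and $x \in P$, write $\beta(x) = xc$ with $c \in P$. Then $x^4 \in Z(P)$ is fixed by $\beta$, so $(xc)^4 = x^4$, while the power identity gives $(xc)^4 = x^4 c^4$; thus $c^4 = 1$, i.e., $c \in \Omega_2(P)$. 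Since $\Omega_2(P)$ is characteristic and hence normal in $P$, $[\beta,x] = xcx^{-1} \in \Omega_2(P)$, so $[B,P] \leq \Omega_2(P)$. But $B$ centralises $\Omega_2(P)$, giving $[B,[B,P]] \leq [B,\Omega_2(P)] = 1$. The standard coprime-action identity $[B,P] = [B,[B,P]]$ (which follows from $P = C_P(B)[B,P]$ under coprime action) then forces $[B,P] = 1$, so $B = 1$ as claimed.

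I expect the main obstacle to be funnelling the triviality hypothesis from $\Omega_2(P)$ up to the whole of $Z(P)$: once the coprime stability lemma for abelian $2$-groups gives a trivial $B$-action on $Z(P)$, the class-$2$ power formula cleanly transports everything into a standard coprime-action argument, and the remaining work is bookkeeping with class-$2$ commutator identities.
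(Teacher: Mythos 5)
Your proof is correct. The first part is the same computation as the paper's, just packaged via the standard class-$2$ power formula $(xy)^n=x^ny^n[y,x]^{\binom{n}{2}}$ together with $[y,x]^2=[y^2,x]=1$; the paper instead expands $(xy)^4$ by hand and kills the square of the commutator directly. For the automorphism statement, the shared key idea is the same in both arguments: once $B$ centralises $Z(P)$, the fourth-power map forces $[P,B]\leq\Omega_2(P)$, hence $[P,B,B]=1$, and the coprime identity $[P,B,B]=[P,B]$ finishes. Where you genuinely diverge is in how you obtain $[Z(P),B]=1$. The paper follows the template of \cite[Thm.~5.3.10]{Gorenstein:1968a}: it takes a minimal counterexample, checks that every proper $B$--invariant subgroup inherits the hypotheses, invokes \cite[Thm.~5.2.4]{Gorenstein:1968a} to see $P$ is non-abelian, and then concludes $[Z(P),B]=1$ because $Z(P)$ is a proper characteristic subgroup. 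You instead argue directly: $\Omega_1(Z(P))\leq\Omega_2(P)$ is centralised, and the coprime decomposition $Z(P)=C_{Z(P)}(B)\times[Z(P),B]$ of the abelian group $Z(P)$ forces $[Z(P),B]=1$, since a nontrivial $[Z(P),B]$ would contain an involution lying in $C_{Z(P)}(B)$. This removes the minimal-counterexample scaffolding entirely (your proof never uses induction), at the cost of quoting the Fitting-type decomposition for coprime action on abelian groups --- which is essentially the same ingredient as \cite[Thm.~5.2.4]{Gorenstein:1968a}, applied to $Z(P)$ rather than to rule out an abelian counterexample. The result is a slightly shorter and arguably more transparent argument; both are complete.
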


\begin{proof} 
Note that $(xy)^2 = x^2(x^{-1}yxy^{-1})y^2$ with all three factors in
$Z(P)$, so 
\begin{equation*}
\begin{split}
(xy)^4 = x^4y^4(x^{-1}yxy^{-1})^2 &=
x^4y^4(x^{-1}(x^{-1}yxy^{-1})yxy^{-1})  \\ &=
x^4y^4(x^{-2}yx^2y^{-1}) = x^4y^4.
\end{split}
\end{equation*}
\indent For the last statement about $p^\prime$--automorphisms we follow \cite[Thm.~5.3.10]{Gorenstein:1968a}.
Let $P$ be a minimal counterexample. If $Q$ is a
proper $B$--invariant subgroup of $P$ then $Q/(Q\cap Z(P))$ is
elementary abelian and $Q\cap Z(P)\leq Z(Q)$, so $Q/Z(Q)$ is
elementary abelian. Moreover, $\Omega_2(Q)\leq \Omega_2(P)$ and thus
$[\Omega_2(Q),B]=1$. So, as $P$ is a minimal counterexample,
$[Q,B]=1$. By \cite[Thm.~5.2.4]{Gorenstein:1968a}, $P$ is
non-abelian. So in particular, $Z(P)$ is a proper characteristic
subgroup of $P$ and thus $[Z(P),B]=1$. 
 We now show that $[P,B]\leq \Omega_2(P)$: Suppose $x\in P$ and $b\in
B$, and note that $x^4\in Z(P)$, as $P/Z(P)$ is elementary
abelian, and thus $(x^4)^b=x^4$, as $[Z(P),B]=1$. Hence
$[x,b]^4=(x^{-1}x^b)^4=x^{-4}(x^4)^b = x^{-4}x^{4}=1$ as wanted,
where we also used the first part of the lemma.
By assumption $[\Omega_2(P),B]=1$, so in particular $[[P,B],B] =1$ by
the above, and we conclude that  $[P,B]=1$, by
\cite[Thm.~5.3.6]{Gorenstein:1968a}. 
\end{proof}

In the case where $\F$ is the fusion system of $G = S \rtimes K$, with $p \nmid |K|$, Theorem~\ref{MainThm} follows
directly from Theorem~\ref{ThLemma}, as the action of elements of $K$
on $S$ is detected by small abelian subgroups of $S$, but the proof of the
general statement requires more work, and here fusion systems
enter in a more prominent way. The arguments can be translated into
the special case of ordinary finite groups, but doing so provides
no essential simplifications, and indeed, from our perspective, the
arguments are considerably shorter and more transparent in the setup of fusion systems.

\smallskip
 
Recall that a {\em saturated fusion system $\F$} on a finite
$p$--group $S$  \cite[Def.~1.2]{Broto/Levi/Oliver:2003a}\cite[Prop.~I.2.5]{Aschbacher/Kessar/Oliver:2011a}
is a category whose objects are the subgroups of $S$, and morphisms
are group monomorphims satisfying axioms which mimic those satisfied
by morphisms induced by conjugation in
some ambient group $G$. More precisely, conjugation by elements in $S$ need to be in
the category, every map needs to factor as an isomorphism followed by
an inclusion, and furthermore two non-trivial conditions need to be
satisfied, called the Sylow and extension axiom, which we recall below
together with some terminology. We refer to
\cite{Aschbacher/Kessar/Oliver:2011a} and 
\cite{Broto/Levi/Oliver:2003a} for detailed information, and also
direct the reader to Puig's original work \cite{Puig:2006a}, where
terminology however differs.
A subgroup $Q \leq S$ is called {\em fully $\F$--normalized} if $|N_S(Q)|$ is
maximal among $\F$--conjugates of $Q$, it is called {\em fully
$\F$--centralized} if the corresponding property holds for the
centralizer, and it is called {\em $\F$--centric} if $C_S(Q') = Z(Q')$ for all
$\F$--conjugates $Q'$ of $Q$. The {\em Sylow axiom} says that if $Q$ is
fully
$\F$--normalized then it is fully $\F$--centralized and $\Aut_S(Q)$ is 
a Sylow $p$--subgroup of $\Aut_\F(Q)$. (Here $\Aut_S(Q)$ means the
automorphisms of $Q$ induced by elements in $S$.) The {\em extension
axiom} says that any morphism $\phi\co Q \to S$ with $\phi(Q)$
fully $\F$--centralized extends to
$$N_\phi = \{ g \in N_S(Q) | {}^\phi (c_g|_Q) \in \Aut_S(\phi(Q))\}.$$

The first tool we need is the following variant of the extension axiom.

\begin{lemma}\label{FSEl1} Fix a saturated fusion system $\F$ on $S$ and
  let
  $\phi\co P \to S$ be {\em any} monomorphism (not necessarily in $\F$). For $Q
  \unlhd P$ and $\psi = \phi|_Q$ the following hold.
\begin{enumerate}
\item\label{a}
$N_\psi \geq P$ and  ${}^\psi\! \Aut_P(Q)=\Aut_{\phi(P)}(\phi(Q))$. 
\item\label{b} If $\psi \in \F$ and $\phi(Q)$ is fully
  $\F$--centralized then $\psi$ extends to $\hat{\psi}\in$ \\  $\Hom_\F(P,\phi(P)C_S(\phi(Q)))$.
\end{enumerate}
\end{lemma}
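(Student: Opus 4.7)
The plan is to prove (a) by unpacking the definition of $N_\psi$ directly, and then derive (b) by applying the extension axiom to $\psi$ and restricting the resulting map to $P \le N_\psi$.

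For (a), I would start from the definition
\[
N_\psi = \{ g \in N_S(Q) \mid {}^\psi(c_g|_Q) \in \Aut_S(\psi(Q))\}
\]
and note that $Q \unlhd P$ implies $P \leq N_S(Q)$, so every $g\in P$ is a candidate. The key computation is that for any $q \in Q$,
\[
\psi(gqg^{-1}) = \phi(gqg^{-1}) = \phi(g)\phi(q)\phi(g)^{-1} = \phi(g)\psi(q)\phi(g)^{-1},
\]
which says precisely that ${}^\psi(c_g|_Q) = c_{\phi(g)}|_{\phi(Q)}$. Since $\phi(g) \in S$, this lies in $\Aut_S(\phi(Q))$, so $g \in N_\psi$ and hence $P \leq N_\psi$. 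Moreover, as $g$ ranges over $P$, the automorphisms $c_{\phi(g)}|_{\phi(Q)}$ exhaust $\Aut_{\phi(P)}(\phi(Q))$, giving the identification ${}^\psi\Aut_P(Q) = \Aut_{\phi(P)}(\phi(Q))$.

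For (b), since $\psi \in \F$ and $\phi(Q) = \psi(Q)$ is fully $\F$-centralized, the extension axiom yields some $\tilde{\psi} \in \Hom_\F(N_\psi, S)$ with $\tilde{\psi}|_Q = \psi$. By part (a), $P \leq N_\psi$, so I can set $\hat{\psi} := \tilde{\psi}|_P \in \Hom_\F(P,S)$. To pin down the image, I would fix $g \in P$ and $q \in Q$ and compare: on one hand $\hat{\psi}(g)\psi(q)\hat{\psi}(g)^{-1} = \hat{\psi}(gqg^{-1}) = \psi(gqg^{-1})$, and on the other hand the identity from (a) gives $\psi(gqg^{-1}) = \phi(g)\psi(q)\phi(g)^{-1}$. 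Comparing shows $\phi(g)^{-1}\hat{\psi}(g) \in C_S(\phi(Q))$, hence $\hat{\psi}(g) \in \phi(g)C_S(\phi(Q)) \subseteq \phi(P)C_S(\phi(Q))$, as required.

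I do not anticipate any real obstacle: both parts are formal manipulations with the fusion system axioms. The one point requiring care is to line up the extension axiom (which only produces extensions to $N_\psi$) with the normalizer condition from (a), but that is exactly what part (a) is designed to supply.
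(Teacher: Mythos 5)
Your proposal is correct and follows essentially the same route as the paper: part (a) via the direct computation ${}^\psi(c_g|_Q)=c_{\phi(g)}|_{\phi(Q)}$, and part (b) by extending $\psi$ over $N_\psi\geq P$ via the extension axiom and then using that same identity to locate the image. The only cosmetic difference is that the paper phrases the final step of (b) as the equality of automorphism groups $\Aut_{\hat\psi(P)}(\phi(Q))=\Aut_{\phi(P)}(\phi(Q))$ (i.e., part (a) applied to $\hat\psi$), whereas you argue element-by-element that $\phi(g)^{-1}\hat\psi(g)\in C_S(\phi(Q))$; these are the same argument.
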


\begin{proof}
For \eqref{a} we calculate, for any $g \in P$ and $x \in \phi(Q)$,
$$({}^\psi c_g)(x) = \psi \circ c_g \circ
\psi^{-1}(x) = \psi(g \psi^{-1}(x) g^{-1}) = \phi(g) x \phi(g^{-1}) =
c_{\phi(g)}(x),$$ from which it is clear that
$N_\psi \geq P$ and $^\psi \Aut_P(Q)=\Aut_{\phi(P)}(\phi(Q))$. 

For \eqref{b} note that the extension axioms imply that
$\psi$ extends to $\hat{\psi}\in \Hom_\F(P,S)$.  And, since
$\Aut_{\phi(P)}(\phi(Q))={^\psi \Aut_P(Q)}=\Aut_{\hat{\psi}(P)}(\phi(Q))$, where the last
equality is by applying \eqref{a} with $\hat\psi$
in place of $\phi$, we conclude that $\hat{\psi}(P)\leq \phi(P)C_S(\varphi(Q))$ as wanted.
\end{proof}

For the purpose of the next proof, recall that a
proper subgroup $H$ of a finite group $G$ is called {\em strongly
$p$-embedded} if $p$ divides the order of $H$ and, for all $g\in
G\backslash H$, $H\cap {^gH}$ has order prime to $p$. Provided $p$ divides $|G|$, one easily shows
that $H$ is strongly $p$--embedded in $G$ if and only if $H$ contains a
Sylow $p$--subgroup $S$ of $G$ such that $N_G(R)\leq H$ for every
$1\neq R\leq S$ (see for example
\cite[Lem.~17.10]{Gorenstein/Lyons/Solomon:1996a} or \cite[Prop.~5.2]{Quillen:1978a}); in particular an
overgroup of a strongly $p$--embedded subgroup is again strongly
$p$--embedded, if it is a proper subgroup. (Groups with strongly
embedded subgroups play a central role in many aspects of local group theory, and in
particular they show up in connection with Alperin's fusion theorem \cite[Thm.~I.3.6]{Aschbacher/Kessar/Oliver:2011a}, though
we shall only indirectly need them in that capacity here.)

We now give the key step in deducing Theorem
\ref{MainThm} from Theorem~\ref{ThLemma}, providing a way to show
that the fusion in $\F$ and $\G$ agree on all subgroups $P$ by
downward induction starting with $S$.

\begin{mlemma}\label{MainLemma} Let $\G \leq \F$ be two saturated fusion
  systems on the same finite $p$--group $S$, and $P \leq S$ an
  $\F$--centric and fully $\F$--normalized subgroup,
with $\Aut_\F(R)=\Aut_{\G}(R)$ for every $P<R\leq N_S(P)$.
Suppose that there exists a subgroup $Q \unlhd P$ with $\Hom_\F(Q,S)=\Hom_\G(Q,S)$. Then $\Aut_\F(P) = \< \Aut_\G(P),C_{\Aut_\F(P)}(Q)\>$.
\end{mlemma}

\begin{proof}To ease the  notation set $G = \Aut_\F(P)$, $H =
  \Aut_\G(P)$, and $\ov G = G/\Inn(P)$, and denote by
  $\ov{U}$ the image in $\ov G$ of any subgroup $U\leq G$. We want to
  show that $G = \< H, C_G(Q)\>$.

\smallskip
\noindent{\em Step 1:} We first assume in addition that 
\begin{equation*}\label{extra}
\tag{$*$}
C_S(\xi(Q)) \leq P \mbox{ for all }\xi\in H
\end{equation*}
and show that $G=HC_G(Q)$. Let $\gamma\in G$ be arbitrary; set
$\psi=(\gamma^{-1})|_{\gamma(Q)}\in \Hom_\F(\gamma(Q),Q)$. Then $\psi\in
\Hom_\G(\gamma(Q),Q)$ by assumption. We claim
that $Q$ is fully centralized in $\G$, and postpone the proof to
Lemma~\ref{FSEl2} below, since it is a general statement. Granted
this, Lemma~\ref{FSEl1}(\ref{b}), applied to $\gamma^{-1}$ and $\G$ in
the roles of $\phi$ and $\F$, implies that we can extend $\psi\co
\gamma(Q) \to Q$ to  $\hat \psi\co P \to PC_S(Q)$ in $\G$, and, as $C_S(Q) \leq P$
by assumption \eqref{extra}, we conclude that $\hat{\psi}\in
H$. 
Since $\gamma\circ\hat{\psi}\in C_{G}(Q)$, we have $\gamma\in
C_{G}(Q)H$, and, as $\gamma$ was arbitrary, this
yields $G=HC_G(Q)$ as required.

\smallskip\noindent
{\em Step 2:} If $P=S$ assumption \eqref{extra} is automatically
satisfied and the lemma follows from Step~1; likewise we are done if $H = G$.
In this step we
show that if $P < S$ and $H < G$ then 
$\bar H$ is strongly $p$--embedded in $\bar G$.
Consider  $P<R\leq
N_S(P)$. For $\varphi\in
N_G(\Aut_R(P))$ it follows from the extension axiom that $\phi$
extends to $\hat{\varphi}\in \Hom_\F(R,S)$, since $P$ is fully
$\F$--normalized and $R \leq N_\phi$, cf.\
Lemma~\ref{FSEl1}. Furthermore, by Lemma~\ref{FSEl1}\eqref{a},
$\Aut_R(P)={^\varphi \Aut_R(P)}=\Aut_{\hat{\varphi}(R)}(P)$, so since
$C_S(P)\leq P$ by $\F$--centricity of $P$, we have
$\hat{\varphi}(R)=R$. It then follows from
our hypothesis that $\hat{\varphi}\in \Aut_{\G}(R)$ and thus
$\varphi\in H$. We conclude that
 $\ov H$ is strongly
$p$--embedded in $\ov G$.

\smallskip
\noindent{\em Step 3:} Finally set $H_0 = \<H,C_{G}(Q)\>$, and suppose for contradiction that there exists $\chi \in G \setminus
H_0$.
Then by Step~2, $\ov H_0$ is a
strongly $p$--embedded subgroup of $\ov G$, so in particular $\Aut_S(P)\cap {}^\chi H_0=\Inn(P)$ as $\Aut_S(P)\leq H_0$. Note that  $ C_{G}(\chi(Q)) = {}^{\chi} C_{G}(Q) \leq
{}^\chi H_0$, so $C_{\Aut_S(P)}(\chi(Q)) \leq \Aut_S(P) \cap {}^{\chi}H_0=\Inn(P)$. Hence $N_S(P)\cap C_S(\chi(Q))
\leq P$, using that $P$ is centric. Now, as $C_S(\chi(Q))P$ is a
$p$--group, $C_S(\chi(Q))\leq P$ (see
\cite[Thm.~2.3.4]{Gorenstein:1968a} for this elementary property of
finite $p$--groups). 
Note that $\xi\circ \chi\in
G\backslash H_0$ for any $\xi \in H$; so as $\chi$ was arbitrary the
argument actually shows that $C_S(\xi(\chi((Q))))\leq P$ for any $\xi
\in H$. But now (\ref{extra}) holds with $\chi(Q)$ in place of $Q$. Observe also that $\Hom_\F(\chi(Q),S)=\Hom_\G(\chi(Q),S)$. For if $\phi\in \Hom_\F(\chi(Q),S)$ then $\phi\circ \chi$ and $\chi^{-1}$ are in $\Hom_\F(Q,S)=\Hom_\G(Q,S)$, so $\phi=(\phi\circ \chi)\circ \chi^{-1}\in \Hom_\G(\chi(Q),S)$. 
Therefore Step~1 gives that $G =HC_G(\chi(Q))$. As $C_G(\chi(Q))$ is conjugate to $C_G(Q)$ in $G$, it follows that $C_G(\chi(Q))$ is conjugate to $C_G(Q)$ by an element of $H$ and thus $G=HC_G(Q)$. This is a contradiction, and we conclude
that $G = H_0$ as wanted.
\end{proof}

We next prove the postponed lemma. 

\begin{lemma} \label{FSEl2}  Let $\F$ be a saturated fusion system on $S$
  and  suppose that $Q \unlhd
  P \leq S$, with $P$ fully $\F$--normalized, and $C_S(\xi(Q)) \leq P$ for all $\xi\in \Aut_\F(P)$. Then $Q$ is fully $\F$--centralized.
\end{lemma}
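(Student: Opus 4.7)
The plan is to fix a fully $\F$--normalized (hence fully $\F$--centralized, by the Sylow axiom) $\F$--conjugate $Q^*$ of $Q$ and prove $|C_S(Q^*)|=|C_S(Q)|$; together with the automatic bound $|C_S(Q)|\le|C_S(Q^*)|$ this forces $Q$ to be fully $\F$--centralized. The first step is to build an $\F$--isomorphism $\hat\phi\co P\to P'$ with $\hat\phi(Q)=Q^*$. Using that $\Aut_S(Q^*)$ is Sylow in $\Aut_\F(Q^*)$, I replace an arbitrary $\F$--iso $Q\to Q^*$ by a composition $\phi\co Q\to Q^*$ with $\phi\Aut_P(Q)\phi^{-1}\leq\Aut_S(Q^*)$, so that $P\leq N_\phi$. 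The extension axiom (applicable because $Q^*$ is fully centralized) then produces $\hat\phi\co P\to P'$ extending $\phi$, with $Q^*\unlhd P'$ by Lemma~\ref{FSEl1}\eqref{a}.

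The heart of the proof is then to show $C_S(Q^*)\leq P'$, since this at once yields $C_S(Q^*)=C_{P'}(Q^*)=\hat\phi(C_P(Q))$ and hence $|C_S(Q^*)|=|C_S(Q)|$, using that $C_S(Q)\leq P$ (the hypothesis with $\xi=\mathrm{id}$). Because $Q^*\unlhd P'$, the group $P'$ normalizes $C_S(Q^*)$, so $P'C_S(Q^*)$ is a $p$--subgroup of $S$ containing $P'$. By the standard property that a proper subgroup of a finite $p$--group has strictly larger normalizer \cite[Thm.~2.3.4]{Gorenstein:1968a}, it will suffice to show $N_{P'C_S(Q^*)}(P')=P'$; and since $C_{P'}(Q^*)\leq C_S(Q^*)\cap N_S(P')$ with $|C_{P'}(Q^*)|=|C_S(Q)|$, this in turn reduces to the bound $|C_S(Q^*)\cap N_S(P')|\leq|C_S(Q)|$.

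Establishing this last bound is the main obstacle, and it is where the hypothesis that $P$ is fully $\F$--normalized enters. The Sylow axiom for $\Aut_\F(P)$ allows me to modify $\hat\phi^{-1}$ by a suitable $\sigma\in\Aut_\F(P)$ into $\alpha=\sigma\circ\hat\phi^{-1}\co P'\to P$ with $\alpha\Aut_S(P')\alpha^{-1}\leq\Aut_S(P)$, so that $N_S(P')\leq N_\alpha$, and the extension axiom then produces an injective $\F$--morphism $\tilde\alpha\co N_S(P')\to N_S(P)$. By construction $\alpha(Q^*)=\sigma(Q)=\xi(Q)$ for $\xi=\sigma\in\Aut_\F(P)$, so the hypothesis $C_S(\xi(Q))\leq P$ and the injectivity of $\tilde\alpha$ give $|C_S(Q^*)\cap N_S(P')|\leq|C_S(\xi(Q))|=|\xi(C_P(Q))|=|C_S(Q)|$, closing the argument. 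The delicate point is coordinating the two applications of the Sylow--plus--extension package---first on the $Q^*$ side to build $\hat\phi$, then on the $P$ side to control $C_S(Q^*)\cap N_S(P')$---so that the hypothesis on $\Aut_\F(P)$--conjugates of $Q$ applies to the right transported image $\xi(Q)$.
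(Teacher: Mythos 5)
Your argument is correct and is essentially the paper's own proof: both transport $Q$ to a fully $\F$--normalized conjugate $Q^*$ together with an extension $P\to P'$ carrying $Q$ to $Q^*$, then pull $N_S(P')$ back to $N_S(P)$ so that the hypothesis applies to an $\Aut_\F(P)$--image of $Q$, and finally use the normalizer--growth property of finite $p$--groups to conclude $C_S(Q^*)\leq P'$ and hence $|C_S(Q^*)|=|C_S(Q)|$. The only cosmetic differences are that the paper obtains the two needed extensions from a single cited lemma of Linckelmann (rather than re-running the Sylow--plus--extension argument twice), and it deduces $C_S(Q^*)\cap N_S(P')\leq P'$ by a direct containment under the injective map to $N_S(P)$ rather than by your order-counting step.
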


\begin{proof}
By \cite[Lem.~2.6]{Linckelmann:2007a} we may choose $\alpha\co N_S(Q) \to S$ in $\F$ such that $\alpha(Q)$ is fully
normalized. Furthermore, as $P$ is fully normalized, again by \cite[Lem.~2.6]{Linckelmann:2007a}, there is $\beta\in \Hom_\F(N_S(\alpha(P)),N_S(P))$ such that
$\beta(\alpha(P))=P$.
Then 
$$\beta(C_S(\alpha(Q))\cap N_S(\alpha(P)))\leq C_S(\beta(\alpha(Q)))\leq P=\beta(\alpha(P))$$ 
where the second inclusion follows by assumption as $\beta\circ\alpha$
restricts to an element of $\Aut_\F(P)$. This yields
$C_S(\alpha(Q))\cap N_S(\alpha(P))\leq\alpha(P)$, so $$N_{C_S(\alpha(Q))\alpha(P)}(\alpha(P))=(C_S(\alpha(Q))\cap N_S(\alpha(P)))\alpha(P)=\alpha(P).$$ Thus, as 
$C_S(\alpha(Q))\alpha(P)$ is a $p$--group, it follows from \cite[Thm.~2.3.3(iii) and Thm.~2.3.4]{Gorenstein:1968a} that $C_S(\alpha(Q))\leq
\alpha(P)$. Hence,   $C_S(\alpha(Q))=C_{\alpha(P)}(\alpha(Q))=\alpha(C_P(Q))=\alpha(C_S(Q))$ where the last equality holds since our assumption gives $C_S(Q)\leq P$. It follows $|C_S(\alpha(Q))|=|\alpha(C_S(Q))|=|C_S(Q)|$; so $Q$ is fully $\F$--centralized as $\alpha(Q)$ is fully $\F$--centralized.
\end{proof}

\begin{proof}[Proof of Theorem~\ref{MainThm}] 
By Alperin's fusion theorem, $\F$ is generated by $\F$--auto\-morphisms of fully $\F$--normalized and
  $\F$--centric subgroups;   see
  \cite[Thm.~I.3.6]{Aschbacher/Kessar/Oliver:2011a}  (in fact we only
  need
  ``$\F$--essential'' subgroups and $S$). We want to show that
  $\Aut_\G(P) = \Aut_\F(P)$ for all $P \leq S$; by downward induction
  on the order we can assume that $\Aut_\G(R) = \Aut_\F(R)$ for all subgroups $R\leq S$
  with $|R|>|P|$, and by the fusion theorem we can furthermore assume
  that $P$ is $\F$--centric and fully $\F$--normalized.
Now choose a characteristic subgroup $D$ of $P$ as described in
Theorem~\ref{ThLemma}, and a maximal abelian subgroup $A$ of
$D$, and recall that the theorem tells us that $A \nsg P$ and
that $C_{\Aut_\F(P)}(A)$ is a $p$--group. As $P$ is
fully $\F$--normalized, $\Aut_S(P)$ is a Sylow $p$--subgroup of
$\Aut_\F(P)$, 
 so if we replace $A$ by a conjugate of $A$ under
$\Aut_\F(P)$, we can arrange that $C_{\Aut_\F(P)}(A)\leq \Aut_S(P)\leq
\Aut_\G(P)$. 
But $A$ also satisfies the assumptions on $Q$ in Lemma~\ref{MainLemma},
so $\Aut_\F(P) = \< \Aut_\G(P), C_{\Aut_\F(P)}(A)\>$, and we
conclude that $\Aut_\G(P) = \Aut_\F(P)$ as wanted.
\end{proof}

We now head towards a proof of Theorem~\ref{KnThm-algebraic}. Recall
that for $Q$ a group and $\F$ a fusion system on $S$ we define
$\Rep(Q,\F) = \Hom(Q,S)/\F$ as the quotient of $\Hom(Q,S)$ under $\F$--conjugation,
i.e., where we identify $\phi \in \Hom(Q,S)$ with  $\alpha
\circ \phi $,
for all $\alpha \in \Hom_\F(\phi(Q),S)$. The proof of Theorem~\ref{KnThm-algebraic} reduces quickly to the case that $\m{G}$ is a subsystem of $\F$. We first make explicit what 
the assumption in Theorem~\ref{KnThm-algebraic} then means, and state
this as a lemma.

\begin{lemma}\label{bijection} 
Let $\F$ be a fusion system on a finite $p$-group $S$ and let $\G$ be
a sub-fusion system of $\F$ on $T \leq S$.
Suppose $Q$ is a
finite $p$--group (not necessarily a subgroup of $S$). The induced map $\Rep(Q,\G) \rightarrow \Rep(Q,\F)$
is surjective if and only if every epimorphic image of $Q$ in
$S$ is $\F$--conjugate to a subgroup of $T$. It is injective, if and
only if $\G$ controls fusion on the epimorphic images of $Q$ in $T$,
i.e., for any epimorphic image $Q'\leq T$ of $Q$ we have $
\Hom_\F(Q',T)=\Hom_\G(Q',T)$.   \qed
\end{lemma}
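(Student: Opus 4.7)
The proof is essentially definition chasing, so my plan is to unpack the definition of $\Rep(Q,-)$ as a set of equivalence classes and then handle the surjectivity and injectivity clauses separately, each by a pair of straightforward implications. The key observation is that an epimorphic image of $Q$ in $S$ is exactly a subgroup of the form $\phi(Q)$ for some $\phi \in \Hom(Q,S)$, and that any morphism in $\F$ whose image happens to lie in $T$ is already a morphism in $\Hom_\F(-,T)$, so the content lies entirely in the axioms defining fusion systems and their morphisms.

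\textbf{Surjectivity.} For the forward direction, I would take an arbitrary epimorphic image $R \leq S$ of $Q$, represented by some surjection $\phi \co Q \twoheadrightarrow R \hookrightarrow S$, and use surjectivity of $\Rep(Q,\G) \to \Rep(Q,\F)$ to find $\psi \in \Hom(Q,T)$ and $\alpha \in \Hom_\F(\psi(Q),S)$ with $\phi = \alpha \circ \psi$; then $\alpha$ is an $\F$--isomorphism $\psi(Q) \xrightarrow{\sim} R$ with $\psi(Q) \leq T$, as required. For the converse, given $[\phi] \in \Rep(Q,\F)$, I would use the assumption to produce $\alpha \in \Hom_\F(\phi(Q),S)$ with $\alpha(\phi(Q)) \leq T$, and observe that $\psi = \alpha \circ \phi \in \Hom(Q,T)$ satisfies $[\psi] \mapsto [\phi]$ in $\Rep(Q,\F)$.

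\textbf{Injectivity.} For the forward direction, I would fix an epimorphic image $Q' \leq T$ of $Q$, represented by a surjection $\psi \co Q \twoheadrightarrow Q'$, and compare $\psi$ with $\alpha \circ \psi$ for an arbitrary $\alpha \in \Hom_\F(Q',T)$: these two elements of $\Hom(Q,T)$ become equal in $\Rep(Q,\F)$, so by injectivity they become equal in $\Rep(Q,\G)$, giving $\beta \in \Hom_\G(Q',T)$ with $\beta \circ \psi = \alpha \circ \psi$; surjectivity of $\psi$ onto $Q'$ then forces $\alpha = \beta \in \Hom_\G(Q',T)$. For the converse, I would take $\psi_1,\psi_2 \in \Hom(Q,T)$ with $[\psi_1]_\F = [\psi_2]_\F$, write $\psi_2 = \alpha \circ \psi_1$ for some $\alpha \in \Hom_\F(\psi_1(Q),S)$, note that $\alpha(\psi_1(Q)) = \psi_2(Q) \leq T$ so $\alpha \in \Hom_\F(\psi_1(Q),T)$, and then invoke the hypothesis to conclude $\alpha \in \Hom_\G(\psi_1(Q),T)$, hence $[\psi_1]_\G = [\psi_2]_\G$.

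There is no real obstacle here; the only mild care required is keeping straight that morphisms in $\Hom(Q,S)$ representing elements of $\Rep(Q,\G)$ are precisely those factoring through $T$, and that $\F$--conjugation on $\Rep(Q,\F)$ is by post--composition with arbitrary $\F$--morphisms out of $\phi(Q)$, not only $\F$--automorphisms. Once that bookkeeping is fixed, each of the four implications is a one--line argument.
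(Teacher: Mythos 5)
Your proof is correct; the paper states this lemma with no proof at all (the \qed is attached to the statement, as the claim is regarded as immediate from the definitions), and your definition-chase through the four implications is exactly the intended argument. The bookkeeping points you flag at the end — that representatives of $\Rep(Q,\G)$ are the maps factoring through $T$, and that the $\F$--identification is by post-composition with arbitrary $\F$--morphisms out of the image — are indeed the only places where care is needed, and you handle them correctly.
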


The next lemma, together with
Theorem~\ref{MainThm}, will easily imply Theorem~\ref{KnThm-algebraic}.
\begin{lemma}\label{sylowiso}
 Let $\F$ be a saturated fusion system on a finite $p$-group $S$ and
 let $\G$ be a saturated subsystem of $\F$ on $T \leq S$. 
Suppose that
 there exists an $\F$--centric subgroup $Q \leq T$ with
 $\Hom_\F(Q,T)=\Hom_\G(Q,T)$. Then $\Aut_\F(T)=\Aut_\G(T)$ and
 $T=S$. 
\end{lemma}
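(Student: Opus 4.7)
The plan is to combine the hypothesis with the Sylow axiom for the saturated subsystem $\G$ and a normalizer-tower bootstrap: first force $\Aut_S(Q)=\Aut_T(Q)$ at the level of $Q$, and then propagate this equality along the ascending normalizer tower of $Q$ inside $T$. When the tower exhausts $T$ this will simultaneously force $T=S$ and, applied with $P=T$, deliver $\Aut_\F(T)=\Aut_\G(T)$.

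First, I would replace $Q$ by a $\G$-conjugate so that $Q$ becomes fully $\G$-normalized; both $\F$-centricity and the equality $\Hom_\F(Q,T)=\Hom_\G(Q,T)$ are preserved by pre- and post-composition with the realizing $\G$-isomorphism, in the same spirit as in the proof of Main Lemma~\ref{MainLemma}. The hypothesis then specializes to $\Aut_\F(Q)=\Aut_\G(Q)$; by the Sylow axiom for $\G$ at the fully $\G$-normalized $Q$, $\Aut_T(Q)$ is a Sylow $p$-subgroup of this group, while $\Aut_S(Q)$ is a $p$-subgroup already containing $\Aut_T(Q)$. Sylow's theorem forces $\Aut_S(Q)=\Aut_T(Q)$, and since $Q$ is $\F$-centric we have $C_S(Q)=Z(Q)\leq T$, which lifts the equality of automorphism groups to $N_S(Q)=N_T(Q)\leq T$.

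I then run an induction along the normalizer tower $Q=P_0<P_1<\cdots$ in $T$, $P_{i+1}=N_T(P_i)$, showing that each $P_i$ is $\F$-centric (since $C_S(P_i)\leq C_S(Q)=Z(Q)\leq P_i$) and satisfies $\Hom_\F(P_i,T)=\Hom_\G(P_i,T)$, and hence by the Sylow step above (applied to $P_i$ in place of $Q$) that $N_S(P_i)=N_T(P_i)\leq T$. The inductive step takes $\psi\in\Hom_\F(P_{i+1},T)$, uses the inductive hypothesis to put $\psi|_{P_i}$ in $\G$, and then uses the extension axiom in $\G$ (after pre- and post-composing with suitable $\G$-isomorphisms to arrange that the target is fully $\G$-centralized) to produce a $\G$-extension $\hat\psi\in\Hom_\G(P_{i+1},T)$ of $\psi|_{P_i}$; the hypothesis of the extension axiom is met on all of $P_{i+1}=N_T(P_i)$ because $\F$-centricity of $\psi(P_i)$ collapses the defining condition. $\F$-centricity of $P_i$ is then used to identify $\hat\psi$ with $\psi$, so $\psi\in\G$. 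Because in the $p$-group $T$ the normalizer tower strictly ascends until reaching $T$, the induction eventually yields $N_S(T)=N_T(T)=T$, so $T$ is self-normalizing in the $p$-group $S$ and hence $T=S$. Applying the propagation a final time with $P=T=S$ then gives $\Aut_\F(T)=\Hom_\F(T,T)=\Hom_\G(T,T)=\Aut_\G(T)$.

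The main technical obstacle is the identification $\hat\psi=\psi$ in the inductive step: by $\F$-centricity of $P_i$, these two extensions of $\psi|_{P_i}$ agree on $P_i$ and their ratio on $P_{i+1}$ a priori only takes values in $C_T(\psi(P_i))=Z(\psi(P_i))$, which need not be trivial in general. Pinning down that this ratio actually equals $1$ requires a careful combined use of $\F$-centricity and the extension axiom in $\F$ itself, upgrading the existence of the $\G$-extension $\hat\psi$ to coincidence with the specific $\F$-morphism $\psi$.
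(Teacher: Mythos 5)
Your overall architecture --- induct along the normalizer tower $Q=P_0\lhd P_1\lhd\cdots\lhd P_n=T$, use the extension axiom in $\G$ to extend $\psi|_{P_i}$ to some $\hat\psi\in\Hom_\G(P_{i+1},T)$, and finish with the Sylow axiom for $\G$ at $T$ (where full $\G$--normalization is automatic) to get $\Aut_S(T)=\Inn(T)$, $N_S(T)=T$ and hence $T=S$ --- is exactly the paper's. But the step you yourself flag as the ``main technical obstacle'' is a genuine gap, and the target you set for closing it is the wrong one: you cannot in general arrange $\hat\psi=\psi$. The extension axiom asserts only the existence of \emph{some} extension, and two extensions of $\psi|_{P_i}$ to $P_{i+1}$ may genuinely differ by a nontrivial element of $C_{\Aut_\F(P_{i+1})}(P_i)$; no combination of $\F$--centricity and the extension axiom will force the ratio to be $1$.

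The correct resolution is that the ratio need not be trivial --- it only needs to lie in $\G$. Since $P_i$ is $\F$--centric, saturation gives $C_{\Aut_\F(P_{i+1})}(P_i)=\Aut_{Z(P_i)}(P_{i+1})$ (\cite[Prop.~A.8]{Broto/Levi/Oliver:2003a} or \cite[Lem.~I.5.6]{Aschbacher/Kessar/Oliver:2011a}), and the automorphisms in $\Aut_{Z(P_i)}(P_{i+1})$ are conjugations by elements of $Z(P_i)\leq T$, hence belong to \emph{every} fusion system on $T$, in particular to $\G$. Therefore $\hat\psi^{-1}\circ\psi\in\Aut_\G(P_{i+1})$ and $\psi=\hat\psi\circ(\hat\psi^{-1}\circ\psi)\in\Hom_\G(P_{i+1},T)$, which is all the induction needs. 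Two smaller points: your preliminary normalization of $Q$ and the claim $N_S(P_i)=N_T(P_i)$ at every stage are unnecessary, and for intermediate $P_i$ you have not justified that they are fully $\G$--normalized, which your appeal to the Sylow axiom would require; the equality $N_S(T)=T$ is only needed at the top of the tower, where it follows from $\Aut_\F(T)=\Aut_\G(T)$ together with the Sylow axiom for $\G$ at $T$ and the $\F$--centricity of $T$ (as an overgroup of $Q$).
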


\begin{proof}
As $T$ is a finite $p$--group, there is a finite chain
$$Q = T_0 \vartriangleleft T_1  \vartriangleleft \cdots \vartriangleleft T_n = T$$
with $T_{i+1} = N_T(T_i)$ for $0\leq i<n$. Note that, as $Q$ is $\F$--centric, every $T_i$ is $\F$--centric and thus also $\G$--centric.
We want to show that
$\Aut_\F(T)=\Aut_\G(T)$ by proving that
\begin{equation*}\label{*}\tag{$**$}
\Hom_\F(T_i,T)=\Hom_\G(T_i,T) \mbox{, for all }0\leq i\leq n,
\end{equation*}
by induction on $i$. For $i=0$ the claim is true by assumption. Let now $0\leq i<n$ such
that $\Hom_\F(T_i,T)=\Hom_\G(T_i,T)$. Let $\gamma\in
\Hom_\F(T_{i+1},T)$. Then $\psi=\gamma|_{T_i}\in
\Hom_\F(T_i,T)=\Hom_\G(T_i,T)$. As $T_i$ is $\G$--centric, $\gamma(T_i)$ is fully $\G$--centralized and $C_T(\gamma(T_i))\leq \gamma(T_i)$; so by
Lemma \ref{FSEl1}\eqref{b}, applied to $\gamma$ and $\G$ in the roles
of $\phi$ and $\F$, $\psi$ extends to $\hat{\psi}\in
\Hom_\G(T_{i+1},\gamma(T_{i+1}))$. Then $\hat{\psi}^{-1}\circ\gamma\in
C_{\Aut_\F(T_{i+1})}(T_i)$ and, by \cite[Prop.~A.8]{Broto/Levi/Oliver:2003a} or \cite[Lem.~I.5.6]{Aschbacher/Kessar/Oliver:2011a},
$C_{\Aut_\F(T_{i+1})}(T_i)=\Aut_{Z(T_i)}(T_{i+1})$. 
We conclude that $\hat{\psi}^{-1}\circ\gamma\in
\Aut_{Z(T_i)}(T_{i+1})\leq \Aut_\G(T_{i+1})$ 
and thus $\gamma\in \Hom_\G(T_{i+1},T)$, i.e.,  \eqref{*}
holds. So $$\Aut_\G(T) = \Hom_\G(T,T) = \Hom_\F(T,T) = \Aut_\F(T).$$
 If $\Aut_\F(T)=\Aut_\G(T)$
then, in particular, $\Aut_\F(T)/\Inn(T)$ has order prime to $p$, by
the Sylow axiom for $\G$, 
and so
$\Aut_S(T)=\Inn(T)$. Since $Q\leq T$ is $\F$--centric, this implies that
$N_S(T)=T$, and thus $S=T$. 
\end{proof}

\begin{proof}[{Proof of Theorem~\ref{KnThm-algebraic}}]
We only prove the claim about fusion systems, as the claim about
groups is a special case. First, it is obvious that $T \to
S$ has to be a monomorphism, since if an element is conjugate to the
trivial element, it is trivial. Hence, we may consider $\G$ as a subsystem
of $\F$. Choose a subgroup $A\leq T$ such that $A$ is of maximal order
among the abelian subgroups of $T$. The assumptions, together with Lemma~\ref{bijection},
imply that
every abelian subgroup of $S$ is $\F$--conjugate to a subgroup of $T$,
so $A$ is of maximal order among the abelian subgroups of
$S$, and hence $\F$--centric.  Moreover, again by Lemma~\ref{bijection},  $\Hom_\G(A,T) = \Hom_\F(A,T)$. 
Lemma~\ref{sylowiso}
now shows that $T=S$. This reduces us to a special case of the
setup of Theorem~\ref{MainThm}, and the result follows. 
\end{proof}

\section{Proofs of Theorems \ref{MainCohomThm} and  \ref{KnThm} }\label{topsec}

\begin{proof}[{Proof of Theorem~\ref{MainCohomThm}}]
By Theorem~\ref{MainThm} we just need to verify that an $F$--isomor\-phism on
cohomology rings implies that $H$ controls fusion in $G$ on elementary
abelian $p$--groups. However this is the statement of
\cite[Prop.~10.9(ii)$\Rightarrow$(i)]{Quillen:1971b+c} (see also \cite{Alperin:2006a}).
\end{proof}

Before proving Theorem~\ref{KnThm} we state a lemma
explaining the condition on $n$, whose proof is elementary and seems
best left to the reader. Below $\bZ_p$ denotes the $p$--adic integers.

\begin{lemma} \label{smalllemma} For a homomorphism $\phi\co H \to G$ of
  finite groups, and a fixed natural number $n$, $\Rep(\bZ_p^n,H)
  \xrightarrow{\sim} \Rep(\bZ_p^n,G)$ if and only if
  $\Rep(A,H) \xrightarrow{\sim} \Rep(A,G)$ for all finite abelian
  $p$--groups $A$ with $\prk(A) = n$.
Furthermore, isomorphism for a fixed positive $n
\geq \min\{\prk(G),\prk(H) +1\}$ implies $\prk(G) = \prk(H)$ and
isomorphism for all $n$. \qed
\end{lemma}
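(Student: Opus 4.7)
The plan is to reduce both parts of the lemma to finite computations, exploiting that $\bZ_p$ is pro-$p$ while the target is finite.

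For the first equivalence, the key observation is that any homomorphism $\bZ_p^n \to G$ factors uniquely through $(\bZ/p^e)^n$ as soon as $p^e$ exceeds the exponent of a Sylow $p$--subgroup of $G$. Choosing $e$ large enough for both $H$ and $G$, one has $\Rep(\bZ_p^n,-) = \Rep((\bZ/p^e)^n,-)$; since $(\bZ/p^e)^n$ has $p$--rank $n$, the direction $(\Leftarrow)$ follows immediately. For $(\Rightarrow)$, first note that a bijection at level $\bZ_p^n$ with $n \geq 1$ forces $|\ker\phi|$ to be coprime to $p$: any nontrivial $p$--element $h \in \ker\phi$ would give a nontrivial class $[f] \in \Rep(\bZ_p^n,H)$ (with $f$ sending one generator to $h$ and the rest to $1$) collapsing to the trivial class, violating injectivity. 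Given a finite abelian $p$--group $A$ of $p$--rank $n$, fix a surjection $\pi\co \bZ_p^n \twoheadrightarrow A$ with kernel $K$; pullback identifies $\Rep(A,-)$ with the conjugation-invariant subset $\{[f] : \ker f \supseteq K\} \subseteq \Rep(\bZ_p^n,-)$. The bijection then restricts to this subset, since $\ker(\phi f) \supseteq K$ forces $f(K) \leq \ker\phi$, a $p$--subgroup of a $p'$--group and hence trivial.

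For the rank equality in the second part, coprimality of $|\ker\phi|$ with $p$ makes $\phi$ injective on $p$--subgroups, so $\prk(H) \leq \prk(G)$ automatically. For the reverse, suppose toward contradiction that $\prk(G) > \prk(H)$; then $\min\{\prk(G), \prk(H)+1\} = \prk(H)+1 \leq n$, so one can pick an elementary abelian $V \leq G$ of rank $\min(n, \prk(G)) > \prk(H)$. The surjection $\bZ_p^n \twoheadrightarrow V \hookrightarrow G$ would then, by the bijection, be matched by some $f\co \bZ_p^n \to H$ with $\phi f$ $G$-conjugate to it; but $\phi(\im f)$ has $p$--rank at most $\prk(H) < \prk V$, contradicting $G$-conjugacy of images.

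Setting $r = \prk(G) = \prk(H)$ and writing $n_0$ for the given level (so $n_0 \geq r$), one obtains the bijection at arbitrary $n$ as follows. For $n \leq n_0$, the projection $\bZ_p^{n_0} \twoheadrightarrow \bZ_p^n$ embeds $\Rep(\bZ_p^n,-)$ as a kernel-containing subset of $\Rep(\bZ_p^{n_0},-)$, and the restriction argument from the first part transports the bijection down. For $n > n_0$, surjectivity is easy: any $g\co \bZ_p^n \to G$ has image $B$ of $p$--rank $\leq r \leq n_0$, so the iso at $n_0$ yields $A \leq H$ with $\phi A$ $G$-conjugate to $B$, after which $g$ lifts through $(\phi|_A)^{-1}$. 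The delicate step---and the main obstacle---is injectivity for $n > n_0$: given $f_1, f_2\co \bZ_p^n \to H$ with $\phi f_1 \sim_G \phi f_2$, one first matches $\im f_1$ with $\im f_2$ up to $H$-conjugation (using the iso at $n_0$ on surjections onto these images), reduces to $\im f_1 = \im f_2 = A$, then transports the $G$-conjugation between $\phi f_1$ and $\phi f_2$ across $\phi|_A$ to an automorphism $\alpha$ of $A$, and finally shows $\alpha \in \Aut_H(A)$. This reduces to the identification $\Aut_H(A) = \Aut_G(\phi A)$ under $\phi$, which follows formally by applying the iso at $n_0$ to surjections $\bZ_p^{n_0} \twoheadrightarrow A$ (permissible since $n_0 \geq r \geq \prk A$), together with the coprimality of $|\ker\phi|$ to $p$.
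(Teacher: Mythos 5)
The paper does not actually prove this lemma---it is stated with its proof explicitly ``left to the reader''---so there is no argument of the authors to compare against; your proof is correct and is surely the intended elementary one. All the essential points are in place: the factorization of continuous homomorphisms $\bZ_p^n\to G$ through $(\bZ/p^e)^n$, the identification of $\Rep(A,-)$ with the kernel-containing classes in $\Rep(\bZ_p^n,-)$ (the restriction being legitimate because $p\nmid|\ker\phi|$ forces $f(K)=1$), the rank comparison via an elementary abelian subgroup of rank exceeding $\prk(H)$, and---for the only genuinely delicate direction, injectivity at levels $n>n_0$---the reduction to surjectivity of $\Aut_H(A)\to\Aut_G(\phi(A))$, which you correctly extract from injectivity at level $n_0$ applied to surjections $\bZ_p^{n_0}\twoheadrightarrow A$.
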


In further preparation for the proof of Theorem~\ref{KnThm}, we briefly recall the
HKR character theorem \cite[Thm~C]{Hopkins/Kuhn/Ravenel:2000a}:
For any multiplicative cohomology theory $E$ and finite group $G$, taking $E^*$--cohomology induces a
map 
\[
\Rep(\bZ_p^n,G) \xlongrightarrow{}
\Hom_{E^*\mbox{-}\mathrm{alg}}(E^*(BG),E^*_{\mathrm{cont}}(B\bZ_p^n))
\]
with $E^*_{\mathrm{cont}}(B\bZ_p^n) = \colim_r E^*(B(\bZ/p^r)^n)$, since any
$\alpha\co \bZ_p^n \to G$ factors canonically through $(\bZ/p^r)^n$ for $r$ large.
By adjunction we can view this as an
$E^*$--algebra homomorphism
\begin{equation}\label{charactermap}
E^*(BG) \xlongrightarrow{} \prod_{\Rep(\bZ_p^n,G)} E^*_{\mathrm{cont}}(B\bZ_p^n)
\end{equation}
where the right-hand side is $E^*_{\mathrm{cont}}(B\bZ_p^n)$--valued
functions on the finite set $\Rep(\bZ_p^n,G)$, with point-wise  multiplication.

The map \eqref{charactermap} is the {\em $n$--character map}  and the HKR character theorem
\cite[Thm.~C]{Hopkins/Kuhn/Ravenel:2000a} says that, for certain $E$, this becomes an
isomorphism after suitable localization. 
More precisely, assume that $E =
E(n)$, so $E^*(BS^1) \cong
E^*\llbracket x\rrbracket$, $|x|=2$, and define $L(E^*)$ to be the
ring of fractions of $E^*_{\mathrm{cont}}(B\bZ_p^n)$ obtained by inverting $\alpha^*(x)$ for all
for all non-zero $\alpha \in \Hom_{\mathrm{cont}}(\bZ_p^n, S^1) \cong
(\bZ/p^\infty)^n$. Then, by \cite[Thm.~C]{Hopkins/Kuhn/Ravenel:2000a},
 $L(E^*)$ is faithfully flat over $E(n)^*[\frac1p]$ (and in
particular non-zero) and \eqref{charactermap} induces an isomorphism
\begin{equation}\label{HKR-theorem}
L(E^*) \otimes_{E^*[\frac1p]} E^*(BG)[\frac1p] \xlongrightarrow{\sim}
\prod_{\Rep(\bZ_p^n,G)} L(E^*)
\end{equation}

\begin{proof}[{Proof of Theorem~\ref{KnThm}}]
By the assumption of the theorem and the HKR character isomorphism
\eqref{HKR-theorem} we have an isomorphism
\begin{equation}\label{res} \prod_{\Rep(\bZ_p^n,G)} L(E^*) \xlongrightarrow{\sim}
  \prod_{\Rep(\bZ_p^n,H)} L(E^*)
\end{equation}
given by precomposing with the natural map $\Rep(\bZ_p^n,H) \to
\Rep(\bZ_p^n,G)$. Since $L(E^*) \neq 0$ we conclude that
$\Rep(\bZ_p^n,H) \to \Rep(\bZ_p^n,G)$ is an isomorphism. By
the assumption on $n$ and Lemma~\ref{smalllemma} this implies that
$\Rep(A,H) \to \Rep(A,G)$ is an isomorphism for all finite abelian
groups, and Theorem~\ref{KnThm} now follows from
Theorem~\ref{KnThm-algebraic}. 
\end{proof}

\section{Variations on the results and further comments}

In this final section we elaborate on some supplementary results alluded to in the introduction.

\begin{rk}[A variety version of
  Theorem~\ref{MainCohomThm}] 
\label{fisorem} 
In Theorem~\ref{MainCohomThm} we can replace the assumption of
$F$--isomorphism by the assumption that the map $\iota^* \co H^*(G;\bar \bF_p) \to H^*(H;\bar \bF_p)$  induces a
bijection of maximal ideals, by referencing
\cite[Prop.~10.9(iii)$\Rightarrow$(i)]{Quillen:1971b+c}, and noting that the
maximal ideal spectrum of $H^*(G;\bar \bF_p)$ identifies with $ \Hom_{\bar
  \bF_p\mbox{-}\mathrm{alg}}(H^*(G;\bar \bF_p),\bar \bF_p) =
\Hom_{\mathrm{rings}}(H^*(G;\bF_p),\bar \bF_p)$.

In general a finite morphism $f: A \to B$ of
finitely generated $\bF_p$--algebras is an $F$--isomorphism if and
only if it induces a {\em variety
isomorphism}, i.e., a bijection $\Hom_{\mathrm{rings}}(B,\Omega) \xrightarrow{\sim}
\Hom_{\mathrm{rings}}(A,\Omega)$ for all algebraically closed fields
$\Omega$ \cite[Prop.~B.8-9]{Quillen:1971b+c}. But to get the same fusion on elementary
abelian $p$-subgroups we in fact just need a bijection on $\Hom_{\mathrm{rings}}(-,\Omega)$, for some proper
field extension $\Omega$ of $\bF_p$, by properties of the Quillen
stratification; see \cite[\S9-10]{Quillen:1971b+c} and also \cite[\S9.1]{Evens:1991a}.
\end{rk}

\begin{eg}[An isomorphism
  modulo $\Nil_n$ for $p=2$ which does not control
  $p$--fusion]\label{th:GnHn}
For any $n$, let $G_n = (2A_4)^n$, $P_n = (Q_8)^n$ and $H_n = \ker(\psi)$, where
$\psi\co G_n \to G_n/P_n \cong (C_3)^n \to
C_3$ is given by $(g_1,\dots,g_n)\mapsto g_1\dots g_n$. Note that
$H_n$ does not control $p$--fusion in $G_n$.  
We however claim that the restriction $H^*(G_n;\bF_2)\to H^*(H_n;\bF_2)$ is an
isomorphism modulo  $\Nil_n$, as defined in \cite[Ch.~6]{Schwartz:1994a}, hence showing that Theorem~\ref{MainCohomThm} fails
severely for $p=2$ ($F$--isomorphism is equivalent to isomorphism
modulo $\Nil_1$):
Recall that $H^*(Q_8;\bF_2) \cong
H^{<4}(Q_8;\bF_2) \otimes \bF_2[z]$, with $|z|=4$, where the action of
$2A_4/Q_8 \cong C_3$ on $\bF_2[z]$ is trivial, while on
$H^{<4}(Q_8;\bF_2)$ it is trivial in degrees 0 and 3, and degrees 1 and 2
consists of the two dimensional irreducible 
$\bF_2C_3$--module $V$.
Since $G_n$ and $H_n$ both have Sylow $2$--subgroup $P_n$, the
restriction map  $H^*(G_n;\bF_2)\to H^*(H_n;\bF_2)$ is injective, and
the cokernel is a tensor product of $\bF_2[z_1,\dots,z_n]$  with a certain
finite module $M$, given as the sum of the non-trivial irreducible
$G_n/P_n$--representations on $H^{<4}(Q_8;\bF_2)^{\otimes n}$ which restrict trivially to $H_n/P_n$.
Using the definition of $\Nil_m$ \cite[Ch.~6]{Schwartz:1994a}, the largest $m$ for
which the restriction map is an isomorphism modulo $\Nil_m$
therefore is the first degree where $M$ is non-zero.
To determine this degree we extend coefficients to $\bF_4$ and use Frobenius reciprocity
$\Hom_{\bF_4(H_n/P_n)}(\bF_4,-) \cong
\Hom_{\bF_4(G_n/P_n)}((\bF_4){\ua _{H_n}^{G_n}},-)$, and note that
$ (\bF_4){\ua_{H_n}^{G_n}} \cong (\bF_4\otimes\dots\otimes \bF_4)
\oplus (\omega\otimes\dots\otimes \omega) \oplus (\bar\omega\otimes 
\dots\otimes \bar\omega)$,
for $\bF_4$, $\omega$ and $\bar\omega$ the three 1-dimensional
$\bF_4 C_3$--modules.
In this notation, we have to locate the first copy of 
$\omega\otimes\dots\otimes\omega$ or $\bar\omega\otimes\dots\otimes\bar\omega$
in $(H^{<4}(Q_8;\bF_4))^{\otimes n}$.
This occurs for the first time in degree $n$, where there is a summand $V\otimes\dots\otimes V$,
which over $\bF_4$ is $(\omega \oplus
\bar\omega)\otimes\dots\otimes(\omega\oplus \bar\omega)$ completing
the proof of the claim.
\end{eg}

\begin{rk}[A generalization of Mislin's theorem via Theorem~\ref{KnThm}] \label{rem:highdegcohom2}
A notion of equivalence stronger than $F$--isomorphism, and in fact
also than that of Example~\ref{th:GnHn}, is isomorphism in large degrees.
If a homomorphism $\phi\co H \to G$ induces an isomorphism
in mod $p$ cohomology in large degrees, we can use
Theorem~\ref{KnThm} to see that $|\ker(\phi)|$ and $|G:\phi(H)|$
are coprime to $p$ and that $\phi(H)$ controls $p$--fusion in $G$,
providing a new proof of a strengthening of Mislin's theorem first obtained in  \cite[Cor.~3.4]{Mislin:1993a} (cf.\ also
\cite[Thm.~1.1]{Benson/Carlson/Robinson:1990a}):
By finiteness of group
cohomology the induced map between
$E^2$--terms of $E(n)^*$--Atiyah--Hirzebruch spectral
sequences \cite[Thm.~12.2]{Boardman:1999a} has kernel and cokernel a finite
$p$--group in each total degree. We therefore deduce an
isomorphism $E(n)^*(BG)[\frac1p] \xrightarrow{\sim}
E(n)^*(BH)[\frac1p]$ by spectral sequence comparison
\cite[Thm.~7.2]{Boardman:1999a}, and the claim now follows from Theorem~\ref{KnThm}.

It is perhaps
interesting to note that this proof structurally mirrors Atiyah's 1961
proof of his $p$--nilpotency criterion
  \cite{Atiyah:1961a}\cite[Thm.~1.3]{Quillen:1971a}, which says that a
  Sylow inclusion $S < G$ controls $p$--fusion if it induces an isomorphism in mod $p$ cohomology in
  sufficiently high dimension: Reinterpreting
  \cite[p.~362]{Quillen:1971a}, Atiyah uses his version of the Atiyah--Hirzebruch
  spectral sequence \cite[Thm.~5.1]{Atiyah:1961a} to conclude that $K^*(BG;\bZ_p)[\frac1p] \xrightarrow{\sim}
K^*(BS;\bZ_p)[\frac1p]$. It now follows from the Atiyah--Segal
completion theorem \cite[Thm.~7.2]{Atiyah:1961a} that $S$ and $G$ have the same fusion on cyclic
$p$--subgroups, and hence the same $p$--fusion by \cite[Satz~IV.4.9]{Huppert:1967a}.
\end{rk}

\begin{rk}[A variety version of Theorem~\ref{KnThm}
  and the role of inverting $p$]\label{spec-remark} 
Also in Theorem~\ref{KnThm} it is enough to
assume a variety isomorphism: If  $\phi^*\co  E(n)^*(BG)[\frac 1p] \to
  E(n)^*(BH)[\frac1p]$ induces a
  bijection on $\Hom_{\mathrm{rings}}(-,\Omega)$ for all algebraically closed
  fields $\Omega$, then the same holds after
  extending scalars along  $E(n)^*[\frac1p] \to L(E^*)$. Hence \eqref{HKR-theorem} shows that \eqref{res}
 induces a bijection
$ \coprod_{\Rep(\bZ_p^n,H)} \Hom_{\mathrm{rings}}(L(E^*),\Omega)  \xrightarrow{\sim}
  \coprod_{\Rep(\bZ_p^n,G)} \Hom_{\mathrm{rings}}(L(E^*),\Omega)$
for any algebraically closed field $\Omega$, so $\Rep(\bZ^n_p,H)
\xrightarrow{\sim} \Rep(\bZ_p^n,G)$, and
Theorem~\ref{KnThm} follows from Theorem~\ref{KnThm-algebraic} as above.

In \cite[\S 3]{Greenlees/Strickland:1999a}
Greenlees--Strickland 
explain how the variety of $E(n)^*(BG)[\frac1p]$
constitutes a `zeroth pure stratum' of a chromatic stratification
of the formal spectrum of $E(n)^*(BG)$. Hence
having an isomorphism on  $E(n)^*(-)[\frac1p]$ is a
priori significantly weaker than having isomorphism on
$E(n)^*(-)$ or the formal spectrum ${\operatorname{Spf}}(E(n)^*(-))$.
\end{rk}

\begin{rk}[Theorem~\ref{KnThm} in relationship to other results in stable
  homotopy theory] \label{KnWhitehead} To illuminate the
  assumptions in Theorem~\ref{KnThm} we note
  that a map induces an isomorphism on $E(n)$ (without inverting $p$) if and only if
it induces isomorphism on the corresponding uncompleted Johnson--Wilson
theory, or isomorphism on $K(i)$ for all $i
\leq n$  (see \cite[Thm.~2.1]{Ravenel:1984a} and
\cite[Lec.~23]{Lurie:2010a}). This in turn happens if and only if it
induces isomorphism on just $K(n)$, by a result of Bousfield \cite[Thm.~1.1]{Bousfield:1999a}. 

Homotopy theorists may wonder if there exists a `purely homotopic'
proof of Theorem~\ref{KnThm}. We do not know such a proof, but
combining  Mislin's original theorem \cite{Mislin:1990a} with some
deep results in homotopy theory, one can get a weaker statement that $E(n)^*$--isomorphism for a quite large $n$ (and without inverting $p$) implies that  $H$ controls $p$--fusion in
$G$. We briefly explain this:  Bousfield proved in 1982 a `$K(n)$--Whitehead theorem'
stating that a map between spaces which is an isomorphism on $K(n)^*$
also induces an isomorphism on $H^i(-;\bF_p)$ for $i \leq n$ (see
\cite[Ex.~8.4]{Bousfield:1982a} and
\cite[Thm.~1.4]{Bousfield:1999a}). 
The claim now follows since it is possible to give a
large constant $n$, depending on the Sylow subgroup, such that isomorphism in $H^i(-;\bF_p)$ for $i \leq n$ implies
isomorphism on $H^*(-;\bF_p)$, e.g., using results of Symonds
\cite[Prop.~10.2]{Symonds:2010a} that say that the generators and relations of group cohomology are at most in degree
$2k^2$, where $k$ is the minimal dimension of a 
faithful complex representation of $G$. Observe the bound needs to
depend on more than the $p$--rank: For any $n$ we can pick $p$ such
that $2n \mid p-1$. In this case $\bF_p \rtimes  C_n < \bF_p \rtimes C_{2n}$ induces an
isomorphism on $H^i(-;\bF_p)$ for $i<n$ without controlling
$p$--fusion. This is in
 contrast to the
 Hup\-pert--Thompson--Tate $p$--nilpotency criterion
 \cite{Tate:1964a}, which states that an inclusion of a Sylow $p$--subgroup that induces isomorphism on
 $H^1(-;\bF_p)$ 
 controls $p$--fusion.
\end{rk}

\begin{acknowledgements}
Our interest
in Theorem~\ref{MainCohomThm} was piqued by a discussion during the problem session at
the August 2011 workshop on Homotopical Approaches to Group Actions
in Copenhagen with Peter Symonds and others, and also stimulated by
\cite{arXiv:1107.5158v2}.  
We thank Mike Hopkins for pointers concerning the relationship between Theorem~\ref{KnThm}
and classical stable homotopy theory, explained in
Remark~\ref{KnWhitehead}, and Lucho  Avramov, Nick Kuhn, and Neil
Strickland for other literature references. 
\end{acknowledgements}

\bibliographystyle{amsplain}
\bibliography{repcoh}

\providecommand{\bysame}{\leavevmode\hbox to3em{\hrulefill}\thinspace}
\providecommand{\MR}{\relax\ifhmode\unskip\space\fi MR }
\providecommand{\MRhref}[2]{%
  \href{http://www.ams.org/mathscinet-getitem?mr=#1}{#2}
}
\providecommand{\href}[2]{#2}
\begin{thebibliography}{10}

\bibitem{Alperin:2006a}
J.~L. Alperin, \emph{{On a theorem of Mislin}}, J.~Pure \& Applied Algebra
  \textbf{206} (2006), 55--58.

\bibitem{Aschbacher/Kessar/Oliver:2011a}
M.~Aschbacher, R.~Kessar, and B.~Oliver, \emph{{Fusion systems in algebra and
  topology}}, London Math.\ Soc.\ Lecture Note Series, vol. 391, Cambridge
  University Press, 2011.

\bibitem{Atiyah:1961a}
M.~F. Atiyah, \emph{{Characters and cohomology of finite groups}}, Publ.\
  Math.\ Inst.\ Hautes \'Etudes Sci. \textbf{9} (1961), 23--64.

\bibitem{Benson:1991b}
D.~J. Benson, \emph{{Representations and Cohomology II: Cohomology of groups
  and modules}}, Cambridge Studies in Advanced Mathematics, vol.~31, Cambridge
  University Press, 1991, reprinted in paperback, 1998.

\bibitem{Benson/Carlson/Robinson:1990a}
D.~J. Benson, J.~F. Carlson, and G.~R. Robinson, \emph{{On the vanishing of
  group cohomology}}, J.~Algebra \textbf{131} (1990), 40--73.

\bibitem{Boardman:1999a}
J.~M. Boardman, \emph{Conditionally convergent spectral sequences}, Homotopy
  invariant algebraic structures ({B}altimore, {MD}, 1998), Contemp. Math.,
  vol. 239, AMS, 1999, pp.~49--84.

\bibitem{Bousfield:1982a}
A.~K. Bousfield, \emph{On homology equivalences and homological localizations
  of spaces}, Amer. J. Math. \textbf{104} (1982), no.~5, 1025--1042.

\bibitem{Bousfield:1999a}
\bysame, \emph{On {$K(n)$}-equivalences of spaces}, Homotopy invariant
  algebraic structures ({B}altimore, {MD}, 1998), Contemp. Math., vol. 239,
  Amer. Math. Soc., Providence, RI, 1999, pp.~85--89.

\bibitem{Broto/Levi/Oliver:2003a}
C.~Broto, R.~Levi, and R.~Oliver, \emph{{The homotopy theory of fusion
  systems}}, J.~Amer.\ Math.\ Soc. \textbf{16} (2003), 779--856.

\bibitem{Brunetti:1998a}
M.~Brunetti, \emph{A new cohomological criterion for the {$p$}-nilpotence of
  groups}, Canad. Math. Bull. \textbf{41} (1998), no.~1, 20--22.

\bibitem{arXiv:1107.5158v2}
J.~Cantarero, J.~Scherer, and A.~Viruel, \emph{Nilpotent {$p$}-local finite
  groups}, arXiv:1107.5158v2.

\bibitem{Cartan/Eilenberg:1956a}
H.~Cartan and S.~Eilenberg, \emph{{Homological algebra}}, Princeton Math.\
  Series, no.~19, Princeton Univ.\ Press, 1956.

\bibitem{Dwyer/Zabrodsky:1987a}
W.~G. Dwyer and A.~Zabrodsky, \emph{{Maps between classifying spaces}},
  Algebraic Topology, Barcelona 1986, Lecture Notes in Mathematics, vol. 1298,
  Springer-Verlag, 1987, pp.~106--119.

\bibitem{Evens:1991a}
L.~Evens, \emph{{Cohomology of groups}}, Oxford University Press, 1991.

\bibitem{Feit/Thompson:1963a}
W.~Feit and J.~G. Thompson, \emph{Solvability of groups of odd order}, Pacific
  J. Math. \textbf{13} (1963), 775--1029.

\bibitem{Gonzalez-Sanchez:2010a}
J.~Gonz{\'a}lez-S{\'a}nchez, \emph{A {$p$}-nilpotency criterion}, Arch. Math.
  (Basel) \textbf{94} (2010), no.~3, 201--205.

\bibitem{Gorenstein:1968a}
D.~Gorenstein, \emph{{Finite groups}}, Harper \& Row, 1968.

\bibitem{Gorenstein/Lyons/Solomon:1996a}
D.~Gorenstein, R.~Lyons, and R.~Solomon, \emph{The classification of the finite
  simple groups. {N}umber 2. {P}art {I}. {C}hapter {G}}, Mathematical Surveys
  and Monographs, vol.~40, AMS, 1996.

\bibitem{Greenlees/Strickland:1999a}
J.~P.~C. Greenlees and N.~P. Strickland, \emph{Varieties and local cohomology
  for chromatic group cohomology rings}, Topology \textbf{38} (1999), no.~5,
  1093--1139.

\bibitem{Henn:1990a}
H.-W. Henn, \emph{Cohomological {$p$}-nilpotence criteria for compact {L}ie
  groups}, Ast\'erisque (1990), no.~191, 6, 211--220, International Conference
  on Homotopy Theory (Marseille-Luminy, 1988).

\bibitem{Hida:2007a}
A.~Hida, \emph{{Control of fusion and cohomology of trivial source modules}},
  J.~Algebra \textbf{317} (2007), 462--470.

\bibitem{Hopkins/Kuhn/Ravenel:1992a}
M.~J. Hopkins, N.~J. Kuhn, and D.~C. Ravenel, \emph{{Morava $K$-theories of
  classifying spaces and generalized characters for finite groups}}, {Algebraic
  Topology (San Feliu de Gu\'\i xols, 1990)}, LNM, vol. 1509, Springer, 1992,
  pp.~186--209.

\bibitem{Hopkins/Kuhn/Ravenel:2000a}
\bysame, \emph{{Generalized group characters and complex oriented cohomology
  theories}}, J.~Amer.\ Math.\ Soc. \textbf{13} (2000), 553--594.

\bibitem{Huppert:1967a}
B.~Huppert, \emph{Endliche {G}ruppen. {I}}, Grundlehren der Mathematischen
  Wissenschaften 134, Springer-Verlag, 1967.

\bibitem{Isaacs/Navarro:2010a}
I.~M. Isaacs and G.~Navarro, \emph{Normal {$p$}-complements and fixed
  elements}, Arch. Math. (Basel) \textbf{95} (2010), no.~3, 207--211.

\bibitem{Jackowski:1978a}
S.~Jackowski, \emph{{Group homomorphisms inducing isomorphisms of cohomology}},
  Topology \textbf{17} (1978), 303--307.

\bibitem{Lannes:1992a}
J.~Lannes, \emph{Sur les espaces fonctionnels dont la source est le classifiant
  d'un {$p$}-groupe ab\'elien \'el\'ementaire}, Inst. Hautes \'Etudes Sci.
  Publ. Math. (1992), no.~75, 135--244.

\bibitem{Linckelmann:2007a}
M.~Linckelmann, \emph{Introduction to fusion systems}, Group representation
  theory, EPFL Press, Lausanne, 2007, pp.~79--113.

\bibitem{Lurie:2010a}
J.~Lurie, \emph{Harvard course notes: Chromatic homotopy theory (252x)}, Notes
  webpage: www.math.harvard.edu/$\sim$lurie/252x.html, 2010.

\bibitem{Miller:1984a}
H.~Miller, \emph{{The Sullivan conjecture on maps from classifying spaces}},
  Ann.\ of Math. \textbf{120} (1984), 39--87.

\bibitem{Mislin:1990a}
G.~Mislin, \emph{{On group homomorphisms inducing mod-$p$ cohomology
  isomorphisms}}, Comment.\ Math.\ Helvetici \textbf{65} (1990), 454--461.

\bibitem{Mislin:1993a}
G.~Mislin, \emph{Lannes' {$T$}-functor and the cohomology of {$BG$}},
  Compositio Math. \textbf{86} (1993), no.~2, 177--187.

\bibitem{Okuyama:2006a}
T.~Okuyama, \emph{{On a theorem of Mislin on cohomology isomorphism and control
  of fusion}}, Cohomology Theory of Finite Groups and Related Topics, RIMS
  Kokyuroku, Kyoto University, vol. 1466, 2006, pp.~86--92.

\bibitem{Puig:2006a}
L.~Puig, \emph{Frobenius categories}, J. Algebra \textbf{303} (2006), no.~1,
  309--357.

\bibitem{Quillen:1971a}
D.~G. Quillen, \emph{{A cohomological criterion for $p$-nilpotence}}, J.~Pure
  \& Applied Algebra \textbf{1} (1971), 361--372.

\bibitem{Quillen:1971b+c}
\bysame, \emph{{The spectrum of an equivariant cohomology ring, I+II}}, Ann.\
  of Math. \textbf{94} (1971), 549--572, 573--602.

\bibitem{Quillen:1978a}
D.~G. Quillen, \emph{{Homotopy properties of the poset of nontrivial
  $p$-subgroups of a finite group}}, Adv.\ in Math. \textbf{28} (1978),
  101--128.

\bibitem{Ravenel:1984a}
D.~C. Ravenel, \emph{{Localization with respect to certain periodic homology
  theories}}, Amer.\ J.\ Math. \textbf{106} (1984), 351--414.

\bibitem{Robinson:1998a}
G.~R. Robinson, \emph{{Arithmetical properties of blocks}}, {Algebraic groups
  and their representations (Cambridge, 1997)}, NATO Adv. Sci. Inst. Ser. C
  Math. Phys. Sci., vol. 517, Kluwer, 1998, pp.~213--232.

\bibitem{Schwartz:1994a}
L.~Schwartz, \emph{{Unstable modules over the Steenrod algebra and Sullivan's
  fixed point conjecture}}, University of Chicago Press, Chicago, 1994.

\bibitem{Symonds:2004a}
P.~Symonds, \emph{{Mackey functors and control of fusion}}, Bull.\ London
  Math.\ Soc. \textbf{36} (2004), 623--632; correction BLMS {\bf 38} (2006),
  786.

\bibitem{Symonds:2007b}
\bysame, \emph{{On cohomology isomorphisms of groups}}, J.~Algebra \textbf{313}
  (2007), 802--810.

\bibitem{Symonds:2010a}
\bysame, \emph{{On the Castelnuovo--Mumford regularity of the cohomology ring
  of a group}}, J.~Amer.\ Math.\ Soc. \textbf{23} (2010), 1159--1173.

\bibitem{Tate:1964a}
J.~Tate, \emph{Nilpotent quotient groups}, Topology \textbf{3} (1964),
  no.~suppl. 1, 109--111.

\end{thebibliography}

\end{document}